\numberwithin{equation}{section} 
\def\C{{\mathbb C}}
\def\P{{\mathbb P}}
\def\*S{{\mathbb S}}
\newcommand{\dis}{\displaystyle}
\newcommand{\cal}{\mathcal}
\newtheorem{theorem}{Theorem}
\newtheorem{proposition}{Proposition}[section]
\newtheorem{lemma}[proposition]{Lemma}
\begin{document}
\title[Residual Cauchy-type formula]
{Residual Cauchy-type formula on Riemann surfaces.}

\author{Peter L. Polyakov}
\address{University of Wyoming \\ Department of Mathematics \\ 1000 E University Ave
\\ Laramie, WY 82071}
\curraddr{}
\email{polyakov@uwyo.edu}

\subjclass[2010]{Primary: 30F, 32A10, 32A26}

\keywords{Riemann surfaces, Cauchy formula}

\dedicatory{In memory of Gennadi Henkin}

\begin{abstract}
We construct a Cauchy type formula on open subdomains of
Riemann surfaces of the form $V=\left\{z\in \C\P^2: P(z)=0\right\}$.
\end{abstract}

\maketitle


\section{Introduction.}\label{Introduction}

\indent
Let $V\subset {\widetilde V}$ be a smoothly bordered open subset of a Riemann surface
\begin{equation}\label{ProjectiveCurve}
{\widetilde V}=\left\{z\in \C\P^2:\ P(z)=0\right\}.
\end{equation}
The goal of the present article is the construction of a Cauchy-type integral formula
defining the values of a holomorphic function $f$ on $V$ through its values on the boundary $bV$.
The article represents a continuation of the line of research developed in the joint articles
with Gennadi Henkin \cite{HP2} and \cite{HP3} and devoted to application of integral
formulas to complex analysis on subvarieties of projective spaces.\\
\indent
Throughout the article we assume that a holomorphic function $f$, for which we construct
a boundary integral formula, is defined on some tubular neighborhood
of $V$ in $\C\P^2$. As in \cite{HP1}, we identify $f$ on a domain in $\C\P^2$ with
its lift to a domain in $\*S^{5}(1)$ satisfying appropriate homogeneity conditions.
Then the sought formula is constructed as the residue of the formula on a tubular domain
\begin{equation}\label{Uepsilon}
U^{\epsilon}=\left\{z\in \*S^{5}(1):\ \left|P(z)\right|<\epsilon, \varrho(z)<0\right\},
\end{equation}
where we assume that $V=\left\{z\in {\widetilde V}: \varrho(z)<0\right\}$.\\
\indent
Integral formula constructed in the article uses two types of barrier-functions. The first
barrier, which is described in the lemma below is a Weil-type barrier constructed
by A. Weil in \cite{W} (see also \cite{HP2}).

\begin{lemma}\label{WeilCoefficients} Let $P(\zeta)$ be a homogeneous polynomial of variables
$\zeta_0,\zeta_1,\zeta_2$ of degree $d$. Then there exist polynomials
$\left\{Q^i(\zeta,z)\right\}_{i=0}^2$ satisfying:
\begin{equation}\label{HomogeneityConditions}
\left\{\begin{array}{ll}
P(\zeta)-P(z)=\sum_{i=0}^2Q^i(\zeta,z)\cdot\left(\zeta_i-z_i\right),\vspace{0.1in}\\
Q^i(\lambda\zeta,\lambda z)=\lambda^{d-1}\cdot Q^i(\zeta,z)\ \text{for}\ \lambda\in\C.
\end{array}\right.
\end{equation}
\end{lemma}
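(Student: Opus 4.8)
The plan is to exhibit the $Q^i$ by an explicit Hefer-type formula and then verify the two required properties by direct inspection. Introduce the auxiliary one-variable polynomial $g(t)=P\bigl(z+t(\zeta-z)\bigr)$; since $P$ has degree $d$, $g$ is a polynomial in $t$ of degree at most $d$ whose coefficients are polynomials in $(\zeta,z)$. By the fundamental theorem of calculus,
\[
P(\zeta)-P(z)=g(1)-g(0)=\int_0^1 g'(t)\,dt=\sum_{i=0}^2\left(\int_0^1\frac{\partial P}{\partial\zeta_i}\bigl(z+t(\zeta-z)\bigr)\,dt\right)(\zeta_i-z_i),
\]
so one sets $Q^i(\zeta,z):=\int_0^1\frac{\partial P}{\partial\zeta_i}\bigl(z+t(\zeta-z)\bigr)\,dt$, which immediately yields the first line of \eqref{HomogeneityConditions}.

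Next I would confirm that each $Q^i$ is genuinely a polynomial in $(\zeta,z)$, not merely a holomorphic function: $\partial P/\partial\zeta_i$ is a polynomial, hence $\frac{\partial P}{\partial\zeta_i}\bigl(z+t(\zeta-z)\bigr)$ is a polynomial in $t$ whose coefficients are polynomials in $(\zeta,z)$, and term-by-term integration via $\int_0^1 t^k\,dt=\tfrac1{k+1}$ preserves polynomiality of the coefficients. For the homogeneity, I would use that $\partial P/\partial\zeta_i$ is homogeneous of degree $d-1$: replacing $(\zeta,z)$ by $(\lambda\zeta,\lambda z)$ turns the integrand into $\frac{\partial P}{\partial\zeta_i}\bigl(\lambda(z+t(\zeta-z))\bigr)=\lambda^{d-1}\frac{\partial P}{\partial\zeta_i}\bigl(z+t(\zeta-z)\bigr)$, and pulling the constant $\lambda^{d-1}$ out of the integral gives $Q^i(\lambda\zeta,\lambda z)=\lambda^{d-1}Q^i(\zeta,z)$.

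There is essentially no serious obstacle here; the only points demanding care are the polynomiality of the coefficients (handled by the explicit monomial integration above) and the observation that the decomposition is far from unique, so only existence of one convenient choice is being asserted. As an alternative to the calculus argument one may telescope
\[
P(\zeta)-P(z)=\sum_{i=0}^2\bigl[P(z_0,\dots,z_{i-1},\zeta_i,\zeta_{i+1},\dots,\zeta_2)-P(z_0,\dots,z_{i-1},z_i,\zeta_{i+1},\dots,\zeta_2)\bigr],
\]
note that each bracketed difference, viewed as a polynomial in the single variable $\zeta_i$, vanishes at $\zeta_i=z_i$ and is therefore divisible by $\zeta_i-z_i$ with polynomial quotient $Q^i$, and observe that each bracket is homogeneous of degree $d$ in the joint variables while $\zeta_i-z_i$ is homogeneous of degree $1$; since the ring of polynomials has no zero divisors, the quotient is then automatically homogeneous of degree $d-1$. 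I would present the integral version as the main proof, as it is the form best suited to the barrier constructions used later in the paper.
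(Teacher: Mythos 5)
Your argument is correct and complete. Note, however, that the paper itself gives no proof of this lemma: it is stated in the introduction and attributed to A. Weil's construction in \cite{W} (see also \cite{HP2}), so there is no in-paper argument to compare against line by line. Of your two routes, the telescoping decomposition
$P(\zeta)-P(z)=\sum_{i}\bigl[P(z_0,\dots,z_{i-1},\zeta_i,\dots,\zeta_2)-P(z_0,\dots,z_i,\zeta_{i+1},\dots,\zeta_2)\bigr]$,
with each bracket divided by $\zeta_i-z_i$ via the factor theorem, is essentially Weil's original construction and is therefore the one the citation points to; the Hefer-type integral $Q^i(\zeta,z)=\int_0^1\frac{\partial P}{\partial\zeta_i}\bigl(z+t(\zeta-z)\bigr)\,dt$ that you feature as the main proof is an equally standard but genuinely different realization. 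The integral version buys you symmetry in $(\zeta,z)$ (indeed $Q^i(\zeta,z)=Q^i(z,\zeta)$ up to the sign conventions) and a formula independent of the ordering of the coordinates, while the telescoping version is purely algebraic, works over any commutative ring, and makes the homogeneity count $d=(d-1)+1$ immediate. Both verifications you give --- polynomiality of the coefficients via $\int_0^1 t^k\,dt=\tfrac1{k+1}$, and homogeneity of the quotient via the absence of zero divisors --- are sound, and your remark that only existence (not uniqueness) of the decomposition is claimed is exactly the right caveat, since the later barrier constructions in the paper use only the two properties in \eqref{HomogeneityConditions} and not any particular choice of the $Q^i$.
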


\indent
The choice of the second barrier is adjusted to the boundary $bV$, but also
depends on the global structure of the Riemann surface ${\widetilde V}$.
The second barrier has the form
\begin{equation}\label{BoundaryBarrier}
F(z,\zeta)=\sum_{i=0}^2R_i(z)(\zeta_i-z_i)
\end{equation}
with coefficients $R_i$ holomorphically depending on $z$. From this point on we assume that
for an arbitrary $z\in V$ there exists a vector field $R(w)=\left[R_0(w), R_1(w), R_2(w)\right]$
on $V$ with values in the conormal vector bundle of $V$, satisfying the following conditions
\begin{equation}\label{RConditions}
\begin{array}{ll}
(i)\quad R(w)\neq 0\ \text{for any}\ w\in V,\vspace{0.1in}\\
(ii)\quad \text{there exists a neighborhood}\ {\cal V}_z\ni z\ \text{such that for}\
w\in {\cal V}_z\ \text{the set}\ {\cal S}(w)=\left\{\zeta\in V: F(w,\zeta)=0\right\}\\
\text{consists of finitely many points}\ \left\{w^{(0)}=w, w^{(1)},\dots, w^{(p)}\right\},
\text{at which the line} \left\{\zeta: F(w,\zeta)=0\right\}\\
\text{transversally intersects}\ V.
\end{array}
\end{equation}
\indent
Existence of a vector function $R(w)$ satisfying condition (i) of \eqref{RConditions}
follows from the triviality of the conormal vector bundle over the open Riemann surface $V$
(see \cite{Fo}), and condition (ii) can be achieved without loss of generality with the use
of the Bertini's Theorem (see \cite{Ha}) by slightly changing the field $R$.

\indent
Using barriers \eqref{HomogeneityConditions} and \eqref{BoundaryBarrier} we can now
formulate the main theorem of the article.

\begin{theorem}\label{Main} Let $V\subset \left\{\C\P^2\setminus\left\{z_0=0\right\}\right\}$
and $U^{\epsilon}$ be as in \eqref{ProjectiveCurve} and \eqref{Uepsilon} respectively,
and let $f$ be a holomorphic function of negative homogeneity in $U^{\epsilon}$
for some $\epsilon>0$. Let $z\in V$ be a fixed point, and let ${\cal V}_z\ni z$
be a neighborhood of $z$ in $V$, such that
\begin{itemize}
\item[(i)] conditions \eqref{RConditions} are satisfied,
\item[(ii)] function ${\dis \frac{w_1}{w_0} }$ takes distinct values at the points
$\left\{w,w^{(1)},\dots,w^{(p)}\right\}$ of ${\cal S}(w)$ for $w\in {\cal V}_z$.
\end{itemize}
\indent
Then for $w\in {\cal V}_z$ the following equality holds for the values of $f$ at the points
of ${\cal S}(w)$:
\begin{equation}\label{fValues}
f(w^{(k)})=\frac{p+1}{p+2}\cdot\frac{\det A_k(w,w^{(1)},\dots,w^{(p)})}{\det A(w)},
\end{equation}
where $A(w)$ is the Vandermonde matrix
\begin{equation}\label{Vandermonde}
A(w)=\left[\begin{tabular}{cccc}
1&1&$\cdots$&1\vspace{0.05in}\\
${\dis \frac{w_1}{w_0} }$&${\dis \frac{w^{(1)}_1}{w_0} }$&$\cdots$&
${\dis \frac{w^{(p)}_1}{w^{(p)}_0} }$\vspace{0.05in}\\
$\vdots$&$\vdots$&&$\vdots$\vspace{0.05in}\\
${\dis \left(\frac{w_1}{w_0}\right)^p }$&
${\dis \left(\frac{w^{(1)}_1}{w_0}\right)^p }$&$\cdots$&
${\dis \left(\frac{w^{(p)}_1}{w^{(p)}_0}\right)^p }$
\end{tabular}\right],
\end{equation}
$A_k(w,w^{(1)},\dots,w^{(p)})$ is the matrix $A(w)$ with the $k$-th column
replaced by the column
\begin{multline}\label{GValues}
G_k(w,w^{(1)},\dots,w^{(p)})
=\frac{2}{(2\pi i)^{3}}\Bigg(\sum_{j=0}^p
\lim_{\epsilon\to 0}\int_{\Gamma^{\epsilon}}
f(\zeta)\cdot\left(\frac{\zeta_1}{\zeta_0}\right)^k\\
\times\det\left[\frac{Q(\zeta,w^{(j)})}{P(\zeta)}\
\frac{R(w^{(j)})}{F(w^{(j)},\zeta)}\ \frac{\bar\zeta}{B(\zeta,w^{(j)})}\right]
d\zeta_0\wedge d\zeta_1\wedge d\zeta_2\Bigg),
\end{multline}
and
\begin{equation}\label{Gamma}
\Gamma^{\epsilon}=\left\{z\in \*S^{5}(1):\ \left|P(z)\right|=\epsilon, \varrho(z)=0\right\}.
\end{equation}
\end{theorem}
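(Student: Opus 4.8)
The plan is to start from a Cauchy–Weil–Koppelman type integral formula on the tubular domain $U^{\epsilon}\subset\*S^{5}(1)$, built from the three barrier sections appearing in the determinant of \eqref{GValues}: the Weil section $Q(\zeta,w)/P(\zeta)$ from Lemma~\ref{WeilCoefficients}, the boundary section $R(w)/F(w,\zeta)$ from \eqref{BoundaryBarrier}, and the Bochner–Martinelli section $\bar\zeta/B(\zeta,w)$ with $B(\zeta,w)=\sum\bar\zeta_i(\zeta_i-w_i)$. Each of these is holomorphic in $w$ and has a first-order pole along the corresponding zero set, so their Hefer-type combination produces a kernel that, restricted to the $3$-cycle $\Gamma^{\epsilon}$ of \eqref{Gamma}, reproduces $f(w)$ for $w\in V$ via the homogeneity-$(-1)$ hypothesis on $f$ (this is the mechanism of \cite{HP1}, \cite{HP2}). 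First I would write this formula, then pass to the residue as $\epsilon\to0$: the factor $1/P(\zeta)$ together with the constraint $|P(\zeta)|=\epsilon$ converts the ambient integral into an integral over the curve $V$ itself, which is where the points $w^{(j)}\in{\cal S}(w)$ enter.

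The crucial algebraic step is to understand how the boundary barrier factor $R(w)/F(w,\zeta)$ behaves when $\zeta$ ranges near $V$. By condition (ii) of \eqref{RConditions}, for fixed $w\in{\cal V}_z$ the set $\{\zeta\in V:F(w,\zeta)=0\}$ is exactly $\{w=w^{(0)},w^{(1)},\dots,w^{(p)}\}$ with transversal intersection, so $1/F(w,\zeta)$, as a meromorphic $(0,0)$-form on $V$, has simple poles precisely at these $p+1$ points. Applying the residue theorem on $V$ (or rather on the relevant cycle), the integral collapses to a sum of $p+1$ residues — this is the origin of the sum $\sum_{j=0}^p$ in \eqref{GValues} and of the appearance of all the points $w^{(j)}$, not just $w$ itself. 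I would compute each residue explicitly, using the transversality to identify the local coordinate and using the homogeneity relation \eqref{HomogeneityConditions} for $Q$ to simplify $Q(\zeta,w^{(j)})/P(\zeta)$ near $\zeta=w^{(j)}$.

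Next I would insert the test factors $(\zeta_1/\zeta_0)^k$ for $k=0,1,\dots,p$. Because $f$ is holomorphic near $V$, so are all the functions $\zeta\mapsto f(\zeta)(\zeta_1/\zeta_0)^k$, and applying the collapsed residue formula to each of them yields, for every $k$, a linear relation expressing $G_k$ (the $k$-th entry of the vector in \eqref{GValues}) as $\frac{p+1}{p+2}\sum_{j=0}^p (w^{(j)}_1/w^{(j)}_0)^k f(w^{(j)})$ — that is, $G_k$ is the $k$-th power sum of the values $f(w^{(j)})$ weighted by the monomials in $w^{(j)}_1/w^{(j)}_0$. In matrix form this says $A(w)\cdot\big(f(w^{(0)}),\dots,f(w^{(p)})\big)^{T}=\frac{p+2}{p+1}\,\big(G_0,\dots,G_p\big)^{T}$, where $A(w)$ is the Vandermonde matrix \eqref{Vandermonde}; hypothesis (ii) of the theorem (distinct values of $w_1/w_0$ at the points of ${\cal S}(w)$) guarantees $\det A(w)\neq0$. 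Solving by Cramer's rule gives precisely \eqref{fValues}. The combinatorial constant $\tfrac{p+1}{p+2}$ is, I expect, the main technical obstacle: it should emerge from carefully tracking the normalization $2/(2\pi i)^3$ against the number of facets of the Weil-type polyhedron built from the three barriers (one gets a simplex-type averaging of $p+2$ vertices, contributing the denominator), so I would verify it by localizing the entire construction to a single transversal intersection point and checking the model case on $\*S^{5}(1)$ directly.
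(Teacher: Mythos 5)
Your overall strategy --- an ambient Koppelman-type formula on $U^{\epsilon}$ combining the Weil barrier $Q/P$, the boundary barrier $R/F$ and the Bochner--Martinelli barrier $\bar\zeta/B$, followed by a residue in $P$ as $\epsilon\to 0$, local residues at the zeros of $F(w,\cdot)$ on $V$, and finally a Vandermonde system solved by Cramer's rule --- is indeed the paper's strategy. But there is a genuine structural gap at the key quantitative step. You assert that each $G_k$ equals the symmetric weighted power sum $\frac{p+1}{p+2}\sum_{j=0}^p\left(w^{(j)}_1/w^{(j)}_0\right)^k f(w^{(j)})$. What the residue computation actually yields (the paper's \eqref{MultiFormula}) is the \emph{asymmetric} relation
\[
2f(w)+\sum_{j=1}^pf(w^{(j)})=\frac{2}{(2\pi i)^{3}}\lim_{\epsilon\to 0}\int_{\Gamma^{\epsilon}_{12}}
f(\zeta)\cdot\det\left[\frac{Q(\zeta,w)}{P(\zeta)}\ \frac{R(w)}{F(w,\zeta)}\ \frac{\bar\zeta}{B(\zeta,w)}\right]d\zeta ,
\]
in which the anchor point $w$ enters with coefficient $2$ and the remaining points of ${\cal S}(w)$ with coefficient $1$: the interior residue term (Lemma~\ref{PointResidues}) contributes $\sum_{j=0}^pf(w^{(j)})$, while the reproducing identity $f(w)=K^{\epsilon}[f](w)$ contributes one more copy of $f(w)$. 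Consequently a single anchor point produces a system whose right-hand side is a \emph{single} boundary integral, not the sum over $j$ appearing in \eqref{GValues}. That sum is a sum of $p+1$ boundary integrals with kernels anchored at the \emph{different} points $w^{(j)}$ (note the arguments $Q(\zeta,w^{(j)})$, $F(w^{(j)},\zeta)$, $B(\zeta,w^{(j)})$), and it arises only after writing the asymmetric system \eqref{JMultiSystem} for each anchor $w^{(j)}$ and summing over $j$; that symmetrization is also the source of the constant $\frac{p+1}{p+2}$. You instead attribute the $\sum_{j=0}^p$ in \eqref{GValues} to the residue theorem applied to the simple poles of $1/F(w,\cdot)$ on $V$ --- but those residues produce the values $f(w^{(j)})$ on the left-hand side, not integrals on the right-hand side. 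As written, your relation would give via Cramer a formula with a single-integral column and the inverted constant $\frac{p+2}{p+1}$, so it cannot produce \eqref{fValues} in the stated form.

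Two smaller omissions. First, you never isolate where the negative homogeneity of $f$ is used; it is essential in Lemma~\ref{ZeroAtInfinity}, where Stokes' theorem on the cone over $\Gamma^{\epsilon}_{12}$ together with the decay of the kernel kills the term containing $\det\left[\frac{Q(\zeta,z)}{P(\zeta)}\ \frac{\bar z}{B^*(\zeta,z)}\ \frac{R(z)}{F(z,\zeta)}\right]$, and without discarding that term the two-barrier exchange does not close up. Second, the reduction from the full Koppelman formula to the two boundary terms you work with requires showing that the $\Gamma^{\epsilon}_2$ and interior Bochner--Martinelli contributions vanish as $\epsilon\to 0$ (Lemmas~\ref{Zero-on-Gamma2} and \ref{Zero-on-U}); these are routine but must be stated, since the interior term is a genuinely singular integral.
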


{\bf Remark 1.} Assumption of negativity of the homogeneity of $f$ in Theorem~\ref{Main}
is made just for shortening the resulting formula. If $f$ has homogeneity $\ell\geq 0$, then
Theorem~\ref{Main} can be applied to function $f(z)\cdot z_0^{-\ell-1}$ and the result
multiplied by $z_0^{\ell+1}$.\\

{\bf Remark 2.} From the formulation of Theorem~\ref{Main} it can be seen that the
Cauchy-type barrier from \eqref{BoundaryBarrier} is local with respect to $z$ and global
with respect to $\zeta$, which represents an important new ingredient in formula
\eqref{fValues}.

\section{Integral formulas on domains in $\C\P^2$.}
\label{Formulas}

\indent
In this section we construct a Cauchy-Weil-Leray type integral formula for functions
on a domain $U^{\epsilon}$ in $\C\P^2$. We start with the Koppelman-type formula
from \cite{HP1} (Proposition 1.2), which originally appeared for strictly pseudonvex manifolds
in a very important article by Henkin \cite{He2} (Theorem 3.2), and then interpreted
in \cite{HP1} for subdomains of such manifolds.
Integral formulas of this type have a long history going back to Moisil \cite{Mo},
Fueter \cite{Fe}, Martinelli \cite{Ma}, Bochner \cite{Bo}, Koppelman \cite{Kp}.\\
\indent
The proposition below is a reformulation of Proposition 1.2 from \cite{HP1}.

\begin{proposition}\label{Bochner} Let $P$ be a homogeneous polynomial
defining the curve $V$ as in \eqref{ProjectiveCurve},
and let $f$ be a holomorphic function of homogeneity $\ell$ on the domain $U^{\epsilon}$.\\
\indent
Then the following equality is satisfied for $z\in U^{\epsilon}$
\begin{equation}\label{BochnerFormula}
f(z)=L^{\epsilon}\left[f\right](z),
\end{equation}
with
\begin{multline*}
L^{\epsilon}\left[f\right](z)
=\frac{2}{(2\pi i)^{3}}\Bigg[\int_{bU^{\epsilon}\times[0,1]}
f(\zeta)\cdot\omega^{\prime}_0\left((1-\lambda)\frac{\bar z}
{B^*(\zeta,z)}+\lambda\frac{\bar\zeta}{B(\zeta,z)}\right)\wedge d\zeta\\
+\int_{U^{\epsilon}}f(\zeta)\cdot
\omega^{\prime}_0\left(\frac{\bar\zeta}{B(\zeta,z)}\right)\wedge d\zeta\Bigg],
\end{multline*}
where
$$B^*(\zeta,z)=\sum_{j=0}^2{\bar z}_j\cdot\left(\zeta_j-z_j\right),
\hspace{0.2in}B(\zeta,z)=\sum_{j=0}^2{\bar\zeta}_j\cdot\left(\zeta_j-z_j\right),$$
$$d\zeta=d\zeta_0\wedge d\zeta_1\wedge d\zeta_2,\hspace{0.2in}
\omega^{\prime}(\eta)=\sum_{k=0}^2(-1)^{k}\eta_k\bigwedge_{j\neq k}d\eta_j,$$
and $\omega^{\prime}_0$ is the $(0,0)$-component with respect to $z$ of the form
$\omega^{\prime}$.
\end{proposition}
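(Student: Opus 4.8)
\emph{Plan of proof.} Proposition~\ref{Bochner} is the $(0,0)$–case of the Bochner–Martinelli–Koppelman homotopy formula for subdomains of a strictly pseudoconvex hypersurface — here $U^\epsilon\subset\*S^5(1)\subset\C^3$ — specialized to a holomorphic $f$ of homogeneity $\ell$; it is precisely the reformulation of \cite[Prop.~1.2]{HP1}, which in turn rests on Henkin's formula \cite[Thm.~3.2]{He2}. I would therefore first recall that construction and then verify that, in the notation introduced above, it collapses to $f(z)=L^\epsilon[f](z)$.

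The construction starts from the two Cauchy–Fantappiè sections $s^{(1)}(\zeta,z)=\bar\zeta$ and $s^{(0)}(\zeta,z)=\bar z$, with Leray denominators $\langle s^{(1)},\zeta-z\rangle=B(\zeta,z)$ and $\langle s^{(0)},\zeta-z\rangle=B^*(\zeta,z)$. A direct computation on $\*S^5(1)$ gives $\mathrm{Re}\,B(\zeta,z)=\tfrac12|\zeta-z|^2$ and $\mathrm{Re}\,B^*(\zeta,z)=-\tfrac12|\zeta-z|^2$, so both denominators are nonzero for $\zeta\neq z$; in addition $\bar\zeta/B$ is holomorphic in $z$, so the interior kernel $\omega'_0(\bar\zeta/B)\wedge d\zeta$ built from it is automatically $\bar\partial_z$–closed. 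Next I would form the linear homotopy $\eta_\lambda=(1-\lambda)\,\bar z/B^*+\lambda\,\bar\zeta/B$ and apply Stokes' theorem on $U^\epsilon$ and on $bU^\epsilon\times[0,1]$: for smooth $f$ this produces the Koppelman identity whose terms are the boundary integral carried by $\eta_\lambda$ over $bU^\epsilon\times[0,1]$, the interior integral $\int_{U^\epsilon}f\,\omega'_0(\bar\zeta/B)\wedge d\zeta$, and a residual $\int_{U^\epsilon}\bar\partial f\wedge\omega'_0(\bar\zeta/B)\wedge d\zeta$, all with the normalizing constant $2/(2\pi i)^3$ coming from the Bochner–Martinelli normalization in $\C^3$ together with the $\lambda$–integration. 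Holomorphy of $f$ kills the residual term, and the truncation $\omega'_0$ — the component of $\omega'$ of bidegree $(0,0)$ in $z$ — records that only that bidegree survives after wedging with $d\zeta=d\zeta_0\wedge d\zeta_1\wedge d\zeta_2$; what is left is exactly $L^\epsilon[f](z)$.

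The Stokes computations and the tracking of signs, orientations and the constant are routine. The hard part will be the reduction from $\C^3$ to the hypersurface $\*S^5(1)$: I would need to verify that the homogeneity-$\ell$ hypothesis on $f$ (equivalently, the homogeneity conditions satisfied by its lift) is exactly what makes the homotopy formula descend fibrewise to $\C\P^2$, legitimizes passing to the truncation $\omega'_0$, and prevents any spurious boundary contribution along the sphere direction. Checking this against \cite[Prop.~1.2]{HP1} would complete the proof.
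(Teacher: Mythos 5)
The paper offers no proof of Proposition~\ref{Bochner}: it is presented as a reformulation of Proposition~1.2 of \cite{HP1}, which in turn rests on Theorem~3.2 of \cite{He2}, so your closing appeal to those references is in effect exactly what the paper itself does. Your preparatory observations are also correct: for $\zeta,z\in\*S^{5}(1)$ one has $\mathrm{Re}\,B(\zeta,z)=\frac{1}{2}|\zeta-z|^{2}$ and $\mathrm{Re}\,B^{*}(\zeta,z)=-\frac{1}{2}|\zeta-z|^{2}$, so both Leray denominators vanish only at $\zeta=z$; the kernel $\omega'_0(\bar\zeta/B)$ is indeed holomorphic in $z$ off the singularity; and $2/(2\pi i)^{3}=(n-1)!/(2\pi i)^{n}$ for $n=3$ is the right normalization.

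The sketch you interpose between these observations and the citation, however, has a genuine structural gap. $U^{\epsilon}$ is a domain in the real hypersurface $\*S^{5}(1)$, hence $5$-real-dimensional. The term you call the ``residual'', $\int_{U^{\epsilon}}\bar\partial f\wedge\omega'_{0}(\bar\zeta/B)\wedge d\zeta$, is a form of bidegree $(3,3)$ in $\zeta$, i.e.\ a $6$-form, and cannot appear as an integral over $U^{\epsilon}$; so the open-domain Koppelman identity you describe is not the identity being invoked. Correspondingly, the interior term $\int_{U^{\epsilon}}f\cdot\omega'_{0}(\bar\zeta/B)\wedge d\zeta$ is not the $\bar\partial f$ remainder of a solid Stokes argument: the singularity of the kernel sits at an interior point $z$ of the $5$-manifold over which one integrates, and the term must be read as a principal-value singular integral, regularized by $z^{\delta}=z-\delta\nu(z)$ as in the Remark following the proposition. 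It arises from the jump (Plemelj--Sokhotski type) behaviour of the Bochner--Martinelli kernel across the hypersurface $\*S^{5}(1)$ in Henkin's CR-manifold formula, not from ``Stokes on $U^{\epsilon}$'' as if $U^{\epsilon}$ were open in $\C^{3}$. Since this jump mechanism is precisely what makes the formula reproduce $f(z)$, and your heuristic replaces it with an argument that does not typecheck dimensionally, the proposal as written identifies the correct sources and normalization but does not supply a proof; the middle of the argument would have to be discarded and replaced by the actual derivation in \cite{He2} and \cite{HP1}, including the verification that the homogeneity hypothesis permits the descent to $\C\P^{2}$, which you correctly flag but leave unexamined.
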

{\bf Remark.}
The first integral in the formula above is a proper integral taken over the boundary
of $U^{\epsilon}$. The second integral is a singular integral, and, following the interpretation
in \cite{He2}, should be understood as
\begin{equation*}
\lim_{\delta\to 0}\int_{U^{\epsilon}}f(\zeta)\cdot
\omega^{\prime}_0\left(\frac{\bar\zeta}{B(\zeta,z^{\delta})}\right)\wedge d\zeta,
\end{equation*}
where $z^{\delta}=z-\delta\cdot\nu(z)$, and $\nu(z)$ is the normal to $\*S^{5}(1)$ at $z$.

\indent
We will transform the right-hand side of equality \eqref{BochnerFormula} into
a Cauchy-Weil-Leray type formula on $U^{\epsilon}$. For this transformation
we need several lemmas.

\indent
In the first lemma for convenience in the further computations we eliminate the integration
with respect to parameter $\lambda$ in the right-hand side of \eqref{BochnerFormula}
and transform the formula for operator $L^{\epsilon}$ in Proposition~\ref{Bochner}.

\begin{lemma}\label{Determinant} For operator $L^{\epsilon}$ in
Proposition~\ref{Bochner} the following equality holds:
\begin{multline}\label{LFormula}
L^{\epsilon}\left[f\right](z)
=\frac{1}{(2\pi i)^{3}}\Bigg[\int_{bU^{\epsilon}}
f(\zeta)\cdot\det\left[\frac{\bar z}{B^*(\zeta,z)}\
\frac{\bar\zeta}{B(\zeta,z)}\ \bar\partial\left(\frac{\bar\zeta}{B(\zeta,z)}\right)\right]
\wedge d\zeta\\
+\int_{U^{\epsilon}}f(\zeta)\cdot
\det\left[\frac{\bar\zeta}{B(\zeta,z)}\
\bar\partial\left(\frac{\bar\zeta}{B(\zeta,z)}\right)\
\bar\partial\left(\frac{\bar\zeta}{B(\zeta,z)}\right)\right]
\wedge d\zeta\Bigg].
\end{multline}
\end{lemma}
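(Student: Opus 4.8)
The plan is to reduce the $\lambda$-integral in Proposition~\ref{Bochner} to a determinant by recognizing the form $\omega'_0\!\left((1-\lambda)\frac{\bar z}{B^*}+\lambda\frac{\bar\zeta}{B}\right)$ as a linear homotopy between the two sections $\eta^{(0)}=\frac{\bar z}{B^*(\zeta,z)}$ and $\eta^{(1)}=\frac{\bar\zeta}{B(\zeta,z)}$. First I would recall the standard identity for the Cauchy-Fantappi\`e-Leray kernel: for a $C^1$ family $\eta(\lambda)$ of sections with $\sum_j\eta_j(\zeta_j-z_j)\equiv 1$, one has
\begin{equation*}
\int_0^1\omega'\bigl(\eta(\lambda)\bigr)
=\text{(up to sign and combinatorial constant)}\ \det\bigl[\eta^{(0)}\ \eta^{(1)}\ \bar\partial_\zeta\eta(\lambda)\bigr]\ \text{terms},
\end{equation*}
more precisely that integrating the Leray form of the segment $[\eta^{(0)},\eta^{(1)}]$ over $\lambda\in[0,1]$ and extracting the $(0,0)$-component in $z$ produces exactly $\det\left[\frac{\bar z}{B^*}\ \frac{\bar\zeta}{B}\ \bar\partial\bigl(\frac{\bar\zeta}{B}\bigr)\right]$. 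The key point making the determinant collapse to a \emph{single} $\bar\partial$-column rather than a $\lambda$-dependent mixture is that $d_\lambda$ of the segment is constant, $\eta^{(1)}-\eta^{(0)}$, and that $\bar\partial_\zeta$ of the affine combination, after wedging, only contributes through $\bar\partial\eta^{(1)}=\bar\partial\bigl(\frac{\bar\zeta}{B}\bigr)$ up to terms killed by the antisymmetrization (the column $\eta^{(1)}-\eta^{(0)}$ already appears, so $\bar\partial\eta^{(0)}$-contributions are redundant modulo the determinant's multilinear alternating structure).

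Concretely, I would proceed as follows. Step one: write $\omega'(\eta)=\sum_k(-1)^k\eta_k\bigwedge_{j\ne k}d\eta_j$ for the homotopy section $\eta=\eta(\lambda,\zeta,\bar\zeta,\bar z)=(1-\lambda)\eta^{(0)}+\lambda\eta^{(1)}$, where $d=d_\lambda+\bar\partial_\zeta+\partial_\zeta$; then take the $(0,0)$-component in $z$, which by the homogeneity/type bookkeeping in \cite{HP1} forces exactly one $d_\lambda$ and the remaining differentials to be $\bar\partial_\zeta$. Step two: since $\eta$ is affine in $\lambda$, $d_\lambda\eta=(\eta^{(1)}-\eta^{(0)})\,d\lambda$ and $\bar\partial_\zeta\eta=(1-\lambda)\bar\partial\eta^{(0)}+\lambda\bar\partial\eta^{(1)}$; expand the wedge $\bigwedge_{j\ne k}d\eta_j$, collect the one factor carrying $d\lambda$, and integrate $\int_0^1(1-\lambda)^a\lambda^b\,d\lambda=\tfrac{a!b!}{(a+b+1)!}$ — with the relevant exponents here this is $\int_0^1 d\lambda=1$ (after the alternating sum has eliminated all but the "pure $\eta^{(1)}$" $\bar\partial$-terms, because any term containing $\bar\partial\eta^{(0)}$ together with the segment vector $\eta^{(1)}-\eta^{(0)}$ and the bounded sections is proportional, under the alternating sum over $k$, to a determinant with a repeated/linearly dependent column). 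Step three: reassemble $\sum_k(-1)^k(\cdots)$ into $\det\bigl[\eta^{(0)}\ \eta^{(1)}\ \bar\partial\eta^{(1)}\bigr]$, i.e. $\det\left[\frac{\bar z}{B^*(\zeta,z)}\ \frac{\bar\zeta}{B(\zeta,z)}\ \bar\partial\bigl(\frac{\bar\zeta}{B(\zeta,z)}\bigr)\right]$, tracking the factor $2$ in front of \eqref{BochnerFormula} against the $1/(2\pi i)^3$ to land on the stated constant $\tfrac{1}{(2\pi i)^3}$. Step four: the volume integral over $U^\epsilon$ has no $\lambda$, so $\omega'_0\bigl(\frac{\bar\zeta}{B}\bigr)$ is directly rewritten as the $2\times 2$-type expansion, which for a $(2,2)$-form in $\zeta$ equals (up to the combinatorial constant absorbed above) $\det\bigl[\frac{\bar\zeta}{B}\ \bar\partial\bigl(\frac{\bar\zeta}{B}\bigr)\ \bar\partial\bigl(\frac{\bar\zeta}{B}\bigr)\bigr]$; this is the familiar rewriting of the Bochner-Martinelli-type kernel as a determinant.

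The main obstacle I anticipate is the bookkeeping in Step two: verifying rigorously that, after extracting the $(0,0)$-component in $z$ and the single $d\lambda$, every term in the alternating sum that involves $\bar\partial\eta^{(0)}=\bar\partial\bigl(\frac{\bar z}{B^*}\bigr)$ genuinely cancels or telescopes, so that the $\lambda$-integrand is in fact $\lambda$-independent and integrates trivially. The clean way to see this is the coordinate-free observation that $\omega'$ is (essentially) $d$-closed modulo the relation $\langle\eta,\zeta-z\rangle=1$, so $\int_{[0,1]}\omega'(\eta(\lambda))$ depends only on the endpoints and equals the Leray form of the pair $(\eta^{(0)},\eta^{(1)})$, which by definition is the determinant displayed in \eqref{LFormula}; I would present the hands-on expansion as the verification of this principle in the present homogeneous coordinates, citing the analogous computation in \cite{HP1} for the structure of $\omega'_0$. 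A secondary, purely clerical obstacle is pinning down the sign: whether the column order $\bigl[\frac{\bar z}{B^*}\ \frac{\bar\zeta}{B}\ \bar\partial(\cdots)\bigr]$ (versus its transpositions) and the orientation of $bU^\epsilon$ conspire to give the plus sign written in \eqref{LFormula}; I would fix this once and for all by testing on the model case $P\equiv\mathrm{const}$, $U^\epsilon$ a ball, where \eqref{LFormula} must reduce to the classical Cauchy-Fantappi\`e formula.
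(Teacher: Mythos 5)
Your proposal follows essentially the same route as the paper: expand the Leray form of the affine homotopy $(1-\lambda)\frac{\bar z}{B^*}+\lambda\frac{\bar\zeta}{B}$, observe that only one $d\lambda$ together with $\bar\partial\left(\frac{\bar\zeta}{B}\right)$ survives in the $(0,0)$-component, integrate out $\lambda$, and reassemble into the determinant — the paper does exactly this via Sommer's identity $\omega'(\eta)=\frac{1}{2}\det\left[\eta\ d\eta\ d\eta\right]$. Two small corrections to your bookkeeping: the $\bar\partial\eta^{(0)}$-terms drop out simply because $B^*(\zeta,z)$ is holomorphic in $\zeta$ (so $\bar\partial_\zeta\left(\frac{\bar z}{B^*}\right)=0$), not by the antisymmetrization argument you sketch, and the $\lambda$-integrand is not constant but equals $2\lambda\,d\lambda$, whose integral $1$ combines with the $\frac{1}{2}$ from Sommer's identity and the prefactor $\frac{2}{(2\pi i)^3}$ to give the stated constant $\frac{1}{(2\pi i)^3}$.
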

\begin{proof}
We use an alternative definition
\begin{equation}\label{omegaDefinition}
\omega^{\prime}(\eta)=\sum_{k=0}^2(-1)^{k}\eta_k\bigwedge_{j\neq k}d\eta_j
=\frac{1}{2}\det\left[\begin{tabular}{ccc}
$\eta_0$&$d\eta_0$&$d\eta_0$
\vspace{0.05in}\\
$\eta_1$&$d\eta_1$&$d\eta_1$\vspace{0.05in}\\
$\eta_2$&$d\eta_2$&$d\eta_2$
\end{tabular}\right],
\end{equation}
through determinant, which is motivated by the definition used by F. Sommer in \cite{So}
in his proof of the Bergman-Weil formula, and then described in the textbook
by B. A. Fuks \cite{Fk}. Since then this formula has been used in many articles
starting in \cite{He1, Po}.\\
\indent
Using expression ${\dis \eta(\zeta,z,\lambda)=(1-\lambda)\frac{\bar z}{B^*(\zeta,z)}
+\lambda\frac{\bar\zeta}{B(\zeta,z)} }$ in \eqref{omegaDefinition}
we obtain
\begin{multline*}
\det\bigg[\begin{tabular}{ccc}
$\eta(\zeta,z,\lambda)$&$d_{\zeta,\lambda}\eta(\zeta,z,\lambda)$
&$d_{\zeta,\lambda}\eta(\zeta,z,\lambda)$
\end{tabular}\bigg]\wedge d\zeta\\
=\Bigg((1-\lambda)\det\left[\frac{\bar z}{B^*(\zeta,z)}\
\frac{\bar\zeta d\lambda}{B(\zeta,z)}\ \frac{\lambda d\bar\zeta}{B(\zeta,z)}\right]
+(1-\lambda)\det\left[\frac{\bar z}{B^*(\zeta,z)}\
\frac{\lambda d\bar\zeta}{B(\zeta,z)}\ \frac{\bar\zeta d\lambda}{B(\zeta,z)}\right]\\
+\lambda\det\left[\frac{\bar\zeta}{B(\zeta,z)}\
\frac{-{\bar z}d\lambda}{B^*(\zeta,z)}\ \frac{\lambda d\bar\zeta}{B(\zeta,z)}\right]
+\lambda\det\left[\frac{\bar\zeta}{B(\zeta,z)}\
\frac{\lambda d\bar\zeta}{B(\zeta,z)}\ \frac{-{\bar z}d\lambda}{B^*(\zeta,z)}\right]
\Bigg)\wedge d\zeta\\
=\Bigg(2\lambda(1-\lambda)d\lambda\wedge
\det\left[\frac{\bar z}{B^*(\zeta,z)}\
\frac{\bar\zeta}{B(\zeta,z)}\ \frac{d\bar\zeta}{B(\zeta,z)}\right]
-2\lambda^2d\lambda\wedge\det\left[\frac{\bar\zeta}{B(\zeta,z)}\
\frac{\bar z}{B^*(\zeta,z)}\ \frac{d\bar\zeta}{B(\zeta,z)}\right]\Bigg)\wedge d\zeta\\
=2\lambda d\lambda\wedge\det\left[\frac{\bar z}{B^*(\zeta,z)}\
\frac{\bar\zeta}{B(\zeta,z)}\ \frac{d\bar\zeta}{B(\zeta,z)}\right]\wedge d\zeta,
\end{multline*}
and
\begin{equation*}
\omega_0^{\prime}(\eta)\wedge d\zeta=\frac{1}{2}\cdot 2\lambda d\lambda\wedge
\det\left[\frac{\bar z}{B^*(\zeta,z)}\
\frac{\bar\zeta}{B(\zeta,z)}\ \frac{d\bar\zeta}{B(\zeta,z)}\right]\wedge d\zeta.
\end{equation*}
\indent
Then we have
\begin{multline}\label{IntegratedForm}
\int_0^1\omega_0^{\prime}(\eta(\zeta,z,\lambda))\wedge d\zeta
=\left(\int_0^1\lambda d\lambda\right)
\det\left[\frac{\bar z}{B^*(\zeta,z)}\
\frac{\bar\zeta}{B(\zeta,z)}\ \frac{d\bar\zeta}{B(\zeta,z)}\right]\wedge d\zeta\\
=\frac{1}{2}\det\left[\frac{\bar z}{B^*(\zeta,z)}\
\frac{\bar\zeta}{B(\zeta,z)}\ \frac{d\bar\zeta}{B(\zeta,z)}\right]\wedge d\zeta.
\end{multline}
\indent
Using equality \eqref{IntegratedForm} we obtain formula \eqref{LFormula} for
$L^{\epsilon}\left[f\right]$:
\begin{multline*}
L^{\epsilon}\left[f\right](z)
=\frac{2}{(2\pi i)^{3}}\Bigg[\int_{bU^{\epsilon}\times[0,1]}
f(\zeta)\cdot\omega^{\prime}_0\left((1-\lambda)\frac{\bar z}
{B^*(\zeta,z)}+\lambda\frac{\bar\zeta}{B(\zeta,z)}\right)\wedge d\zeta\\
+\int_{U^{\epsilon}}f(\zeta)\cdot
\omega^{\prime}_0\left(\frac{\bar\zeta}{B(\zeta,z)}\right)\wedge d\zeta\Bigg]\\
=\frac{1}{(2\pi i)^{3}}\Bigg[\int_{bU^{\epsilon}}
f(\zeta)\cdot\det\left[\frac{\bar z}{B^*(\zeta,z)}\
\frac{\bar\zeta}{B(\zeta,z)}\ \bar\partial\left(\frac{\bar\zeta}{B(\zeta,z)}\right)\right]
\wedge d\zeta\\
+\int_{U^{\epsilon}}f(\zeta)\cdot
\det\left[\frac{\bar\zeta}{B(\zeta,z)}\
\bar\partial\left(\frac{\bar\zeta}{B(\zeta,z)}\right)\
\bar\partial\left(\frac{\bar\zeta}{B(\zeta,z)}\right)\right]
\wedge d\zeta\Bigg].
\end{multline*}
\end{proof}

\indent
In the following proposition we construct a Cauchy-Weil-Leray type formula on
$\epsilon$-neighborhoods of a curve in $\C\P^2$.

\begin{proposition}\label{EpsilonFormula} Let
$$V=\left\{z\in \C\P^2:\ P(z)=0\right\}$$
be a curve in $\C\P^2$, let $U^{\epsilon}$ be as in \eqref{Uepsilon}, and
let $f$ be a holomorphic function of homogeneity $\ell$ on $U^{\epsilon}$.\\
\indent
Then for an arbitrary $z\in U^{\epsilon}$ the following equality holds
\begin{equation}\label{KEpsilonFormula}
f(z)=K^{\epsilon}\left[ f \right](z),
\end{equation}
where
\begin{multline}\label{KOperator}
K^{\epsilon}\left[f\right](z)=\frac{1}{(2\pi i)^{3}}\Bigg[\int_{\Gamma^{\epsilon}_2}
f(\zeta)\cdot\det\left[\frac{\bar z}{B^*(\zeta,z)}\
\frac{\bar\zeta}{B(\zeta,z)}\ \bar\partial\left(\frac{\bar\zeta}{B(\zeta,z)}\right)\right]
\wedge d\zeta\\
+\int_{\Gamma^{\epsilon}_1}
f(\zeta)\cdot\det\left[\frac{Q(\zeta,z)}{P(\zeta)-P(z)}\
\frac{\bar\zeta}{B(\zeta,z)}\ \bar\partial\left(\frac{\bar\zeta}{B(\zeta,z)}\right)\right]
\wedge d\zeta\\
-\int_{\Gamma^{\epsilon}_{12}}
f(\zeta)\cdot\det\left[\frac{Q(\zeta,z)}{P(\zeta)-P(z)}\
\frac{\bar z}{B^*(\zeta,z)}\ \frac{\bar\zeta}{B(\zeta,z)}\right]
\wedge d\zeta\\
+\int_{U^{\epsilon}}f(\zeta)\cdot
\det\left[\frac{\bar\zeta}{B(\zeta,z)}\
\bar\partial\left(\frac{\bar\zeta}{B(\zeta,z)}\right)\
\bar\partial\left(\frac{\bar\zeta}{B(\zeta,z)}\right)\right]
\wedge d\zeta\Bigg],
\end{multline}
$$\Gamma_1^{\epsilon}=\left\{\zeta\in \*S^{5}(1): |P(\zeta)|=\epsilon,
\varrho(\zeta)< 0\right\},\hspace{0.1in}
\Gamma_2^{\epsilon}=\left\{\zeta\in \*S^{5}(1): \varrho(\zeta)=0,
|P(\zeta)|<\epsilon\right\},$$
$$\Gamma_{12}^{\epsilon}=\left\{\zeta\in \*S^{5}(1): \varrho(\zeta)=0,
|P(\zeta)|=\epsilon\right\},$$
with coefficients $\left\{Q^i(\zeta,z)\right\}_{i=0}^2$ satisfying conditions
\eqref{HomogeneityConditions} from Lemma~\ref{WeilCoefficients}.\\
\indent
The function defined by \eqref{KOperator} on $U^{\epsilon}$
admits the descent onto a neighborhood of $V$ in $\C\P^2$.
\end{proposition}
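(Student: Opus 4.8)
The plan is to start from the Koppelman-type identity $f(z)=L^{\epsilon}[f](z)$ of Lemma~\ref{Determinant} and reshape its boundary integral, leaving the singular volume integral over $U^{\epsilon}$ in \eqref{LFormula} untouched, since it is already the last term of \eqref{KOperator}. Write $bU^{\epsilon}=\Gamma_2^{\epsilon}\cup\Gamma_1^{\epsilon}$ with the induced orientations, where $\Gamma_2^{\epsilon}=\{\varrho=0,\ |P|<\epsilon\}$ and $\Gamma_1^{\epsilon}=\{|P|=\epsilon,\ \varrho<0\}$ are glued along $b\Gamma_1^{\epsilon}=\Gamma_{12}^{\epsilon}$. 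On $\Gamma_2^{\epsilon}$ the kernel $\det\left[\frac{\bar z}{B^*(\zeta,z)}\ \frac{\bar\zeta}{B(\zeta,z)}\ \bar\partial\left(\frac{\bar\zeta}{B(\zeta,z)}\right)\right]$ is kept as is, becoming the first term of \eqref{KOperator}. On $\Gamma_1^{\epsilon}$ one replaces the Bochner--Martinelli barrier $\bar z/B^*$ by the Weil barrier $Q(\zeta,z)/(P(\zeta)-P(z))$; this is legitimate because for $z\in U^{\epsilon}$ and $\zeta\in\Gamma_1^{\epsilon}$ one has $|P(z)|<\epsilon=|P(\zeta)|$, so $P(\zeta)-P(z)\neq0$ and, by \eqref{HomogeneityConditions}, the resulting section is holomorphic in $z$ and satisfies $\sum_i Q^i(\zeta,z)(\zeta_i-z_i)/(P(\zeta)-P(z))=1$; moreover $z\notin\Gamma_1^{\epsilon}$ and on $\*S^{5}(1)$ the denominators $B,B^*$ vanish only on $\zeta=z$, so every kernel occurring is smooth on $\Gamma_1^{\epsilon}$.

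The replacement on $\Gamma_1^{\epsilon}$ is a Weil-type exchange of barriers, carried out by the same Cauchy--Fantappi\`e calculus that underlies Proposition~\ref{Bochner} and Lemma~\ref{Determinant}. One works on $\Gamma_1^{\epsilon}\times\Delta$, where $\Delta=\{t\in\R^{3}:t_i\geq0,\ t_0+t_1+t_2=1\}$ is the standard $2$-simplex, with the section $\eta_t=t_0\frac{\bar z}{B^*(\zeta,z)}+t_1\frac{Q(\zeta,z)}{P(\zeta)-P(z)}+t_2\frac{\bar\zeta}{B(\zeta,z)}$, which is normalized ($\langle\eta_t,\zeta-z\rangle\equiv1$) throughout $\Delta$, and with the form $f(\zeta)\,\omega'_0(\eta_t)\wedge d\zeta$. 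As in the Cauchy--Fantappi\`e calculus behind Proposition~\ref{Bochner}, holomorphy of $f$ forces this form to be closed under $d_{\zeta,t}$ (the $z$-antiholomorphic contributions of $\omega'$ dropping out), so Stokes' theorem over $\Gamma_1^{\epsilon}\times\Delta$ expresses $0$ as the sum of the contributions of the three faces of $\Delta$ and of $\Gamma_{12}^{\epsilon}\times\Delta$. On the face $\{t_2=0\}$ the section $\eta_t$ is holomorphic in $\zeta$, so $\omega'_0(\eta_t)\wedge d\zeta$ vanishes there (too many holomorphic $\zeta$-differentials against $d\zeta$); on $\{t_1=0\}$ the $1$-simplex integration done already in the proof of Lemma~\ref{Determinant} yields the original $\Gamma_1^{\epsilon}$-kernel $\det\left[\frac{\bar z}{B^*}\ \frac{\bar\zeta}{B}\ \bar\partial\frac{\bar\zeta}{B}\right]\wedge d\zeta$; on $\{t_0=0\}$ it yields the same expression with $\bar z/B^*$ replaced by $Q/(P(\zeta)-P(z))$; and $\Gamma_{12}^{\epsilon}\times\Delta$, after integrating out the $2$-simplex, yields the kernel $\det\left[\frac{Q}{P(\zeta)-P(z)}\ \frac{\bar z}{B^*}\ \frac{\bar\zeta}{B}\right]\wedge d\zeta$. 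Collecting these with the correct signs gives
\begin{multline*}
\int_{\Gamma_1^{\epsilon}}f(\zeta)\det\left[\frac{\bar z}{B^*(\zeta,z)}\ \frac{\bar\zeta}{B(\zeta,z)}\ \bar\partial\left(\frac{\bar\zeta}{B(\zeta,z)}\right)\right]\wedge d\zeta\\
=\int_{\Gamma_1^{\epsilon}}f(\zeta)\det\left[\frac{Q(\zeta,z)}{P(\zeta)-P(z)}\ \frac{\bar\zeta}{B(\zeta,z)}\ \bar\partial\left(\frac{\bar\zeta}{B(\zeta,z)}\right)\right]\wedge d\zeta\\
-\int_{\Gamma_{12}^{\epsilon}}f(\zeta)\det\left[\frac{Q(\zeta,z)}{P(\zeta)-P(z)}\ \frac{\bar z}{B^*(\zeta,z)}\ \frac{\bar\zeta}{B(\zeta,z)}\right]\wedge d\zeta,
\end{multline*}
and substituting this, together with the retained $\Gamma_2^{\epsilon}$-term and the volume term of \eqref{LFormula}, into $f(z)=L^{\epsilon}[f](z)$ produces exactly $f(z)=K^{\epsilon}[f](z)$ on $U^{\epsilon}$. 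I expect this barrier-exchange to be the only genuine obstacle in the argument: establishing the $d_{\zeta,t}$-closedness of $f\,\omega'_0(\eta_t)\wedge d\zeta$ (with the $z$-antiholomorphic terms, as in Proposition~\ref{Bochner}), identifying which face of $\Delta$ contributes what, and fixing the orientations of $\Delta$ and of $\Gamma_{12}^{\epsilon}$ so that the three signs agree with \eqref{KOperator}; everything else is bookkeeping.

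For the descent statement, note that the cycles $\Gamma_1^{\epsilon},\Gamma_2^{\epsilon},\Gamma_{12}^{\epsilon}$ and $U^{\epsilon}$ are invariant under the Hopf action $\zeta\mapsto\lambda\zeta$, $|\lambda|=1$, because $|P|$ and $\varrho$ are. Under the simultaneous scaling $(\zeta,z)\mapsto(\lambda\zeta,\lambda z)$ with $|\lambda|=1$ one has $B^*(\lambda\zeta,\lambda z)=|\lambda|^{2}B^*(\zeta,z)$ and $B(\lambda\zeta,\lambda z)=|\lambda|^{2}B(\zeta,z)$, so each of the columns $\bar z/B^*$, $\bar\zeta/B$ and $\bar\partial(\bar\zeta/B)$ picks up the factor $\lambda^{-1}$; by \eqref{HomogeneityConditions} and $P(\lambda\zeta)-P(\lambda z)=\lambda^{d}\bigl(P(\zeta)-P(z)\bigr)$ the column $Q/(P(\zeta)-P(z))$ picks up $\lambda^{d-1}/\lambda^{d}=\lambda^{-1}$ as well; finally $d\zeta\mapsto\lambda^{3}d\zeta$ and $f\mapsto\lambda^{\ell}f$. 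Hence in each of the four terms of \eqref{KOperator} the $3\times3$ determinant scales by $\lambda^{-3}$, this is cancelled by the $\lambda^{3}$ from $d\zeta$, and the whole integrand scales by $\lambda^{\ell}$; a change of variables $\zeta\mapsto\lambda\zeta$ on the invariant cycles then gives $K^{\epsilon}[f](\lambda z)=\lambda^{\ell}K^{\epsilon}[f](z)$. Thus $K^{\epsilon}[f]$ is homogeneous of degree $\ell$, like $f$, and so descends along the Hopf fibration $\*S^{5}(1)\to\C\P^2$ to a function of homogeneity $\ell$ on the image of $U^{\epsilon}$, which is the open set $\{z\in\C\P^2:\ |P(z)|<\epsilon,\ \varrho(z)<0\}$; since this set contains $V=\{P=0,\ \varrho<0\}$, it is a neighborhood of $V$ in $\C\P^2$.
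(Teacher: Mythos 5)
Your proof is correct and follows the same overall architecture as the paper's: start from $f=L^{\epsilon}[f]$ in the form \eqref{LFormula}, keep the $\Gamma^{\epsilon}_2$ term and the volume term untouched, and exchange the Bochner--Martinelli barrier for the Weil barrier on $\Gamma^{\epsilon}_1$, picking up the $\Gamma^{\epsilon}_{12}$ correction. The one place where you genuinely diverge is the mechanism of that exchange: you run the full Cauchy--Fantappi\`e homotopy over $\Gamma^{\epsilon}_1\times\Delta$ with a $2$-simplex of normalized sections and then sort out the contributions of the three faces and of $\Gamma^{\epsilon}_{12}\times\Delta$, whereas the paper short-circuits this with the pointwise $4\times4$ determinant identity \eqref{GeneralDeterminant} (a row of $1$'s and a $0$ against the two holomorphic sections, $\bar\zeta/B$, and $\bar\partial(\bar\zeta/B)$), which yields the three-term kernel identity directly, after which Stokes' theorem is applied on $\Gamma^{\epsilon}_1$ only to the single term $\det\bigl[\frac{Q}{P(\zeta)-P(z)}\ \frac{\bar z}{B^*}\ \bar\partial(\frac{\bar\zeta}{B})\bigr]\wedge d\zeta$, which is $\bar\partial$-exact because its first two columns are holomorphic in $\zeta$. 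The two devices are equivalent --- \eqref{GeneralDeterminant} is precisely what survives of your simplex after the $t$-integration has been carried out --- but the paper's version avoids the orientation and sign bookkeeping on the faces of $\Delta$, which is exactly the step you flag as the delicate one; your resulting identity coincides with the paper's \eqref{FirstQIntegral}, so the substitution back into \eqref{LFormula} gives \eqref{KOperator} as claimed. You also supply a homogeneity argument for the descent statement, which the paper asserts without proof; that addition is correct and worth having.
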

\begin{proof}
We begin the proof of equality \eqref{KEpsilonFormula} by using equality \eqref{LFormula}
from Lemma~\ref{Determinant} with operator $K^{\epsilon}=L^{\epsilon}$:
\begin{multline}\label{PreliminaryKEpsilon}
K^{\epsilon}\left[f\right](z)
=\frac{1}{(2\pi i)^{3}}\left(
\sum_{j=1}^2\int_{\Gamma^{\epsilon}_j}
f(\zeta)\cdot\det\left[\frac{\bar z}{B^*(\zeta,z)}\
\frac{\bar\zeta}{B(\zeta,z)}\ \bar\partial\left(\frac{\bar\zeta}{B(\zeta,z)}\right)\right]
\wedge d\zeta\right.\\
+\left.\int_{U^{\epsilon}}f(\zeta)\cdot
\det\left[\frac{\bar\zeta}{B(\zeta,z)}\
\bar\partial\left(\frac{\bar\zeta}{B(\zeta,z)}\right)\
\bar\partial\left(\frac{\bar\zeta}{B(\zeta,z)}\right)\right]
\wedge d\zeta\right),
\end{multline}
and then transform the right-hand side of \eqref{PreliminaryKEpsilon} into the
right-hand side of \eqref{KOperator}.\\
\indent
To transform the integral over $\Gamma_1^{\epsilon}$ in the right-hand side of \eqref{PreliminaryKEpsilon}
we use equality
\begin{equation}\label{GeneralDeterminant}
\det\left[\begin{tabular}{cccc}
$1$&$1$&$1$&$0$
\vspace{0.05in}\\
$\eta_1(\zeta,z)$&$\eta_2(\zeta,z)$
&${\dis \frac{\bar\zeta}{B(\zeta,z)} }$
&${\dis \bar\partial\left(\frac{\bar\zeta}{B(\zeta,z)}\right) }$
\end{tabular}\right]=0 
\end{equation}
for ${\dis \eta_1(\zeta,z)=\frac{Q(\zeta,z)}{P(\zeta)-P(z)} }$,
${\dis \eta_2(\zeta,z)=\frac{\bar z}{B^*(\zeta,z)} }$, to obtain equality
\begin{multline*}
\det\left[\frac{\bar z}{B^*(\zeta,z)}\
\frac{\bar\zeta}{B(\zeta,z)}\ \frac{d\bar\zeta}{B(\zeta,z)}\right]\wedge d\zeta\\
=\det\left[\frac{Q(\zeta,z)}{P(\zeta)-P(z)}\
\frac{\bar\zeta}{B(\zeta,z)}\ \frac{d\bar\zeta}{B(\zeta,z)}\right]\wedge d\zeta
-\det\left[\frac{Q(\zeta,z)}{P(\zeta)-P(z)}\ \frac{\bar z}{B^*(\zeta,z)}\
\bar\partial\left(\frac{\bar\zeta}{B(\zeta,z)}\right)\right]\wedge d\zeta,
\end{multline*}
and, using the fact that $f$ is holomorphic in $U^{\epsilon}$, equality
\begin{multline}\label{FirstQIntegral}
\int_{\Gamma^{\epsilon}_1}
f(\zeta)\cdot\det\left[\frac{\bar z}{B^*(\zeta,z)}\
\frac{\bar\zeta}{B(\zeta,z)}\ \bar\partial\left(\frac{\bar\zeta}{B(\zeta,z)}\right)\right]
\wedge d\zeta\\
=\int_{\Gamma^{\epsilon}_1}
f(\zeta)\cdot\det\left[\frac{Q(\zeta,z)}{P(\zeta)-P(z)}\
\frac{\bar\zeta}{B(\zeta,z)}\ \bar\partial\left(\frac{\bar\zeta}{B(\zeta,z)}\right)\right]
\wedge d\zeta\\
-\int_{\Gamma^{\epsilon}_{12}}
f(\zeta)\cdot\det\left[\frac{Q(\zeta,z)}{P(\zeta)-P(z)}\
\frac{\bar z}{B^*(\zeta,z)}\ \frac{\bar\zeta}{B(\zeta,z)}\right]
\wedge d\zeta.
\end{multline}
\indent
Then using equality \eqref{FirstQIntegral} in \eqref{PreliminaryKEpsilon} we obtain
equality \eqref{KOperator}.
\end{proof}

\section{Residual integrals on $V$.}
\label{ResidualIntegrals}

\indent
In this section we analyze the behavior of the integrals
in the right-hand side of \eqref{KOperator} as $\epsilon\to 0$. In all estimates below
we use the same notation \lq\lq$C$\rq\rq\ for different constants.

\indent
For the limit of the first integral in the right-hand side of \eqref{KOperator}
we have the following lemma.\\

\begin{lemma}\label{Zero-on-Gamma2}
Let $z\in U^{\epsilon_0}\subset\*S^{5}(1)$ for some $\epsilon_0>0$,
and let $f$ be a holomorphic function in $U^{\epsilon_0}$. Then
\begin{equation}\label{Gamma2-Equality}
\lim_{\epsilon\to 0}\int_{\Gamma^{\epsilon}_2}
f(\zeta)\cdot\det\left[\frac{\bar z}{B^*(\zeta,z)}\
\frac{\bar\zeta}{B(\zeta,z)}\ \bar\partial\left(\frac{\bar\zeta}{B(\zeta,z)}\right)\right]
\wedge d\zeta=0.
\end{equation}
\end{lemma}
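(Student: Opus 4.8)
The plan is to estimate the integrand on $\Gamma_2^\epsilon$ directly and show that, although the integrand may be singular near the diagonal $\zeta = z$, the singularity is integrable and the domain $\Gamma_2^\epsilon$ shrinks to a set of measure zero as $\epsilon \to 0$. First I would observe that the determinant
\[
\det\left[\frac{\bar z}{B^*(\zeta,z)}\ \frac{\bar\zeta}{B(\zeta,z)}\ \bar\partial\left(\frac{\bar\zeta}{B(\zeta,z)}\right)\right]
\]
is the standard Bochner--Martinelli--Koppelman kernel; each occurrence of $B(\zeta,z)$ in the denominator contributes a factor bounded by $C/|\zeta - z|$, while $\bar\partial(\bar\zeta/B(\zeta,z))$ brings in one more power, so the kernel is $O(|\zeta - z|^{-3})$ on the real $4$-dimensional manifold $\Gamma_2^\epsilon = \{\varrho(\zeta)=0,\ |P(\zeta)|<\epsilon\}$. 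Away from the diagonal the kernel is smooth, so the only possible obstruction to the limit being zero is a concentration of mass near $\zeta = z$.

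The key point is that the domain of integration is cut out by the two constraints $\varrho(\zeta) = 0$ and $|P(\zeta)| < \epsilon$. The first constraint forces $\zeta \in b\*S^5(1) \cap \{\varrho = 0\}$ — a fixed real hypersurface in the sphere independent of $\epsilon$ — and the second constraint, for small $\epsilon$, confines $\zeta$ to a thin tube of width $O(\epsilon)$ around $V$. Now there are two cases depending on the fixed point $z$. If $z \notin \overline{V}$ (equivalently $|P(z)|$ is bounded away from $0$), then for $\epsilon < |P(z)|/2$ the domain $\Gamma_2^\epsilon$ stays uniformly away from $\zeta = z$, the kernel is uniformly bounded there, and the area of $\Gamma_2^\epsilon$ is $O(\epsilon)$, so the integral is $O(\epsilon) \to 0$. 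If $z \in \overline{V} \cap \Gamma_2^{\epsilon_0}$, then the diagonal does meet (the closure of) $\Gamma_2^\epsilon$; here I would split the integral into the part over $\{|\zeta - z| > \delta\}$, which is $O_\delta(\epsilon) \to 0$ as $\epsilon \to 0$ for each fixed $\delta$ by the previous argument, and the part over $\{|\zeta - z| \le \delta\}$. On this small cap, using local coordinates on the $3$-real-dimensional slice $\Gamma_2^\epsilon \cap \{|\zeta - z| \le \delta\}$ (one dimension is consumed by $\varrho = 0$, and the condition $|P| < \epsilon$ restricts one further real direction to an interval of length $O(\epsilon)$), the volume element is $O(\epsilon \cdot r^2\, dr\, d\sigma)$ where $r = |\zeta - z|$, and against the $O(r^{-3})$ kernel this yields a bound $C\,\epsilon \int_0^\delta r^{-1}\, dr$, which still degenerates. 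The honest fix is to exploit the extra cancellation in the Koppelman kernel: the form $\det[\cdots] \wedge d\zeta$ restricted to any real $4$-manifold picks out only the components of $\bar\partial(\bar\zeta/B)$ that are genuinely of bidegree $(0,1)$ in $\zeta$, and on the Leray section $\bar\zeta$ (as opposed to $\bar z$) one gets that $d_\zeta(\bar\zeta/B(\zeta,z))$ contributes a term $\frac{d\bar\zeta}{B} - \frac{\bar\zeta\, d_\zeta B}{B^2}$ whose singular part is tangential, so the effective order of the kernel on $\Gamma_2^\epsilon$ is only $O(|\zeta-z|^{-2})$; then the bound becomes $C\,\epsilon\int_0^\delta r^{-2} \cdot r^2\, dr = C\,\epsilon\,\delta \to 0$.

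An alternative, cleaner route — which I would present if the direct estimate proves cumbersome — is to argue via Stokes' theorem. The kernel $\det[\frac{\bar z}{B^*}\ \frac{\bar\zeta}{B}\ \bar\partial(\frac{\bar\zeta}{B})] \wedge d\zeta$ is closed in $\zeta$ on $U^{\epsilon_0} \setminus \{z\}$ (it is the relevant piece of the Koppelman form, whose total $\bar\partial$-derivative accounts for the volume term already separated off in Lemma~\ref{Determinant}), so $\Gamma_2^\epsilon$ can be deformed, rel its boundary $\Gamma_{12}^\epsilon$, within the level set $\{\varrho = 0\}$ without changing the integral, provided the deformation avoids $z$. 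As $\epsilon \to 0$ the manifold $\Gamma_2^\epsilon$ collapses onto $\Gamma_2^0 = \{\varrho = 0,\ P = 0\}$, which is a real $3$-dimensional set (a curve's boundary region), hence of measure zero against the top-degree form $\det[\cdots]\wedge d\zeta$ restricted to a $4$-manifold; combined with the integrability of the kernel near $z$ established above, dominated convergence gives the limit $0$.

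I expect the main obstacle to be the borderline nature of the estimate near the diagonal: a crude size bound on the Bochner--Martinelli kernel is exactly critical against the $3$-dimensional-plus-$\epsilon$-tube volume element, so one must genuinely use the structure of the Koppelman kernel — namely that its most singular component is tangential to the Leray section and therefore drops out when the form is pulled back to the real hypersurface $\{\varrho = 0\}$, or equivalently invoke the $\bar\partial$-closedness to reduce to a lower-dimensional limiting set. Making this cancellation precise in local coordinates adapted to $\*S^5(1) \cap \{\varrho = 0\}$ is the delicate step; everything else is a routine volume-shrinking argument.
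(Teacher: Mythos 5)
Your case analysis is organized around the wrong defining inequality of $\Gamma_2^{\epsilon}$, and as a result you end up fighting a near-diagonal singularity that is not actually present. The hypothesis is $z\in U^{\epsilon_0}=\{\,|P(z)|<\epsilon_0,\ \varrho(z)<0\,\}$, so $\varrho(z)<0$ strictly, whereas every $\zeta\in\Gamma_2^{\epsilon}$ satisfies $\varrho(\zeta)=0$. Hence for the fixed $z$ of the lemma one has $\mathrm{dist}(z,\Gamma_2^{\epsilon})\geq \mathrm{dist}(z,\{\varrho=0\})>0$ uniformly in $\epsilon$, the denominators $B(\zeta,z)$ and $B^{*}(\zeta,z)$ are bounded away from zero on $\Gamma_2^{\epsilon}$, and the kernel is uniformly bounded there. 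The paper's proof is then a single line: the integral is bounded by $C\cdot\mathrm{Volume}(\Gamma_2^{\epsilon})\leq C\cdot\epsilon\to 0$.

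You instead try to obtain the separation from $|P(z)|$ being bounded away from zero; this fails precisely in the case of interest ($z\in V$, where $P(z)=0$), and pushes you into the case ``$z\in\overline{V}\cap\Gamma_2^{\epsilon_0}$'', i.e.\ $\varrho(z)=0$, which is excluded by the hypothesis $z\in U^{\epsilon_0}$. In that (unneeded) case your own bound is, as you concede, exactly critical ($C\,\epsilon\int_0^{\delta}r^{-1}\,dr$), and the two proposed rescues are asserted rather than proved: the claimed tangential cancellation improving the kernel to $O(|\zeta-z|^{-2})$ on $\{\varrho=0\}$ is not carried out in coordinates, and the Stokes route rests on the kernel being closed in $\zeta$ away from $z$, which is false for this single Koppelman component (its $\bar\partial$ in $\zeta$ produces the other terms of the homotopy formula rather than vanishing). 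So as written the argument does not close. The fix is simply to use the constraint $\varrho(\zeta)=0$ versus $\varrho(z)<0$ for the separation; everything delicate in your write-up then becomes unnecessary.
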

\begin{proof}  Using the uniform boundedness of the kernel on $\Gamma^{\epsilon}_2$
for $z\in V$ we obtain the following estimate
\begin{equation*}
\left|\int_{\Gamma^{\epsilon}_2}
f(\zeta)\cdot\det\left[\frac{\bar z}{B^*(\zeta,z)}\ \frac{\bar\zeta}{B(\zeta,z)}\
\bar\partial\left(\frac{\bar\zeta}{B(\zeta,z)}\right)\right]
\wedge d\zeta\right|\leq C\cdot \text{Volume}\left\{\Gamma^{\epsilon}_2\right\}
\leq C\cdot\epsilon\to 0
\end{equation*}
as $\epsilon\to 0$.
\end{proof}

\indent
For the fourth integral in the right-hand side of \eqref{KOperator} we have the following lemma.

\begin{lemma}\label{Zero-on-U}
Let $z\in U^{\epsilon_0}\subset\*S^{5}(1)$ for some $\epsilon_0>0$,
and let $f$ be a holomorphic function in $U^{\epsilon_0}$. Then
\begin{equation}\label{Zero-U-Equality}
\lim_{\epsilon\to 0}\int_{U^{\epsilon}}f(\zeta)\cdot
\det\left[\frac{\bar\zeta}{B(\zeta,z)}\
\bar\partial\left(\frac{\bar\zeta}{B(\zeta,z)}\right)\
\bar\partial\left(\frac{\bar\zeta}{B(\zeta,z)}\right)\right]
\wedge d\zeta=0.
\end{equation}
\end{lemma}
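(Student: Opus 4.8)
The plan is to estimate the integral in \eqref{Zero-U-Equality} directly, using that $f$ is bounded on a neighbourhood of $\widehat{V}:=\{P=0\}\cap\*S^{5}$ together with two structural facts about the kernel. First I would record the identity
\begin{equation*}
\det\!\left[\frac{\bar\zeta}{B(\zeta,z)}\ \ \bar\partial\!\left(\frac{\bar\zeta}{B(\zeta,z)}\right)\ \ \bar\partial\!\left(\frac{\bar\zeta}{B(\zeta,z)}\right)\right]=\frac{2\,\omega^{\prime}(\bar\zeta)}{B(\zeta,z)^{3}},
\end{equation*}
which holds because $\bar\partial(\bar\zeta/B)=\bar\partial\bar\zeta/B-\bar\zeta\,\bar\partial B/B^{2}$ and every term of the $3\times3$ form-determinant in which two or more columns are proportional to the vector $\bar\zeta$ vanishes; since the first column and the $\bar\zeta\,\bar\partial B/B^{2}$-parts of the other two columns are all proportional to $\bar\zeta$, only the term built from $\bar\zeta/B,\ \bar\partial\bar\zeta/B,\ \bar\partial\bar\zeta/B$ survives, and it equals $2\omega^{\prime}(\bar\zeta)/B^{3}$ by \eqref{omegaDefinition}. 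Second, on $\*S^{5}$ one has $\mathrm{Re}\,B(\zeta,z)=\tfrac12|\zeta-z|^{2}$, so $|B(\zeta,z)|\ge\tfrac12|\zeta-z|^{2}$, while $|\omega^{\prime}(\bar\zeta)\wedge d\zeta|$ is bounded. Since $|B|^{-3}$ is not integrable over $U^{\epsilon}$ when $z\in\widehat V$, I would work throughout with the regularization $z^{\delta}=(1-\delta)z$ of the Remark after Proposition~\ref{Bochner} (the outward unit normal to $\*S^{5}$ at $z$ is $z$ itself), for which $B(\zeta,z^{\delta})=1-(1-\delta)\sum_{j}\bar\zeta_{j}z_{j}$ has $\mathrm{Re}\,B(\zeta,z^{\delta})=\delta+\tfrac{1-\delta}{2}|\zeta-z|^{2}$ and $\partial B(\zeta,z^{\delta})/\partial\theta=i(1-\delta)\sum_{j}\bar\zeta_{j}z_{j}$ along the $S^{1}$-fibre; all the bounds below will be uniform in $\delta$.

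I would then split $U^{\epsilon}$ into a small coordinate box around $z$ and its complement. On the complement $|\zeta-z|$ is bounded below, hence the kernel is bounded uniformly in $\epsilon$ and $\delta$; and since $U^{\epsilon}$ is an $\epsilon$-tube about the $3$-real-dimensional set $\widehat V\cap\{\varrho<0\}$ we have $\mathrm{Vol}(U^{\epsilon})\le C\epsilon^{2}$, so this part contributes $O(\epsilon^{2})$. If $P(z)\ne 0$ the box part is empty for small $\epsilon$ and we are done, so the essential case is $z\in\widehat V$. After a unitary change of coordinates (which leaves $\*S^{5}$, $B$, and $\omega^{\prime}(\bar\zeta)\wedge d\zeta$ invariant) I may take $z=(1,0,0)$ and use adapted real coordinates $(\theta,w,u)$ near $z$: $\theta$ parametrizes the fibre $\zeta\mapsto e^{i\theta}\zeta$ through $z$ (a direction in $T_{z}\widehat V$ that is unrestricted in $U^{\epsilon}$ because $|P|$ and $\varrho$ are $S^{1}$-invariant), $w\in\C$ runs along $\widehat V$ transversally to the fibre, and $u\in\C$ is transverse to $\widehat V$ inside $\*S^{5}$ with $|P(\zeta)|$ comparable to $|u|$. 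Then the box part of $U^{\epsilon}$ is $\{|\theta|<r,\ |w|<r,\ |u|<C\epsilon\}$, with $dV_{5}$ comparable to $d\theta\,dV_{2}(w)\,dV_{2}(u)$, and $\partial B(\zeta,z^{\delta})/\partial\theta=i(1-\delta)+O(|\zeta-z|)$, so $|\partial B/\partial\theta|\ge\tfrac14$ on the box after shrinking $r$.

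The heart of the matter is that a modulus estimate of the kernel is not enough --- since $|\omega^{\prime}(\bar\zeta)\wedge d\zeta|$ is bounded below while $|B|$ is as small as $|\zeta-z|^{2}$ in the complex-tangential directions, $\int_{U^{\epsilon}}|B|^{-3}\,dV_{5}$ in fact diverges --- so one must use the oscillation of $B^{-3}$ along the fibre. I would integrate by parts in $\theta$ in the inner integral, using successively $B^{-3}=(\partial_{\theta}B)^{-1}\partial_{\theta}(-\tfrac12 B^{-2})$, then $B^{-2}=(\partial_{\theta}B)^{-1}\partial_{\theta}(-B^{-1})$, then $B^{-1}=(\partial_{\theta}B)^{-1}\partial_{\theta}\log B$ (legitimate since $\mathrm{Re}\,B>0$). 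Each step produces a boundary term at $\theta=\pm r$, where $|B|\ge c r^{2}$, so these terms are bounded by a constant depending only on $r$, together with an integral carrying one fewer power of $B$ in the denominator; after three steps one is left with $\int_{|\theta|<r}(\text{smooth, bounded})\cdot\log B\,d\theta$, which is $O(1)$ uniformly because $|\log B|\le|\log|B||+\pi/2$ and $\int_{|\theta|<r}\bigl|\log|B(\zeta(\theta,w,u),z^{\delta})|\bigr|\,d\theta\le C$ uniformly in $(w,u,\delta)$ --- a routine consequence of $|B|\gtrsim|\zeta-z|^{2}\gtrsim\theta^{2}$. Hence the inner $\theta$-integral is bounded by a constant independent of $(w,u,\epsilon,\delta)$, and integrating this bound over $\{|w|<r,\ |u|<C\epsilon\}$ bounds the box part by $C(r)\,\epsilon^{2}$. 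Letting first $\delta\to0$ and then $\epsilon\to0$ yields \eqref{Zero-U-Equality}.

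I expect the main obstacle to be precisely this lack of absolute convergence: unlike in Lemma~\ref{Zero-on-Gamma2} and the boundary terms of \eqref{KOperator}, the conclusion cannot follow from a volume-times-supremum estimate, and one must exploit the Cauchy--Fantappi\`{e} structure of the kernel --- concretely, the cancellation produced by integrating by parts along the circle-fibre, the one direction in which $B$ is non-degenerate. An alternative I would keep in reserve is to write the kernel as $\bar\partial_{\zeta}$ of an explicit form on $U^{\epsilon}\setminus\{z\}$ and apply Stokes, reducing matters to estimates over $\Gamma^{\epsilon}_{1},\Gamma^{\epsilon}_{2},\Gamma^{\epsilon}_{12}$ of the kind already carried out in Section~\ref{ResidualIntegrals}.
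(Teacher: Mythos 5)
Your argument is correct, but it takes a genuinely different route from the paper's. The paper splits $U^{\epsilon}$ according to the size of $|B(\zeta,z)|$: on $U^{\epsilon}\cap\{|B|>\epsilon\}$ it makes a pure modulus estimate in adapted coordinates (the cutoff $|B|>\epsilon$ tames the singularity and the constraint $|P|<\epsilon$ confines two real directions, giving $O(\epsilon)$); on $U^{\epsilon}\cap\{|B|<\epsilon\}$ it writes $f(\zeta)=\bigl(f(\zeta)-f(z)\bigr)+f(z)$, estimates the first piece absolutely using $|f(\zeta)-f(z)|\leq C|\zeta-z|$ (the extra power restores integrability, giving $O(\sqrt{\epsilon})$), and disposes of the $f(z)$ term by invoking, without proof, the cancellation identity $\lim_{\delta\to 0}\bigl|\int_{|B(\zeta,z)|<\delta}\det[\cdots]\wedge d\zeta\bigr|=0$. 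You instead keep $f$ intact and extract the cancellation by repeated integration by parts along the circle fibre, where $\partial_{\theta}B$ is bounded away from zero; your observation that $\int_{U^{\epsilon}}|B|^{-3}\,dV_{5}$ actually diverges (logarithmically, in the direction along $V$) is correct, so some cancellation mechanism is indeed unavoidable, and your diagnosis applies equally to the paper's third piece. What your route buys is that the cancellation is explicit and self-contained, with the slightly better rate $O(\epsilon^{2})$; what the paper's route buys is brevity, at the price of resting on the quoted kernel identity --- which is stated over a full ball $\{|B|<\delta\}$ rather than over the region $U^{\epsilon}\cap\{|B|<\epsilon\}$ where it is applied, so your version arguably shores up a soft spot. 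Your fallback idea (writing the kernel as $\bar\partial$ of an explicit form and applying Stokes) is closer in spirit to how the paper treats the other integrals in \eqref{KOperator}, e.g.\ Lemma~\ref{ZeroSurface}, than to its proof of this particular lemma.
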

\begin{proof}
To prove equality \eqref{Zero-U-Equality} we represent the integral in the left-hand side as
\begin{multline}\label{SumRepresentation}
\int_{U^{\epsilon}}f(\zeta)\cdot
\det\left[\frac{\bar\zeta}{B(\zeta,z)}\
\bar\partial\left(\frac{\bar\zeta}{B(\zeta,z)}\right)\
\bar\partial\left(\frac{\bar\zeta}{B(\zeta,z)}\right)\right]
\wedge d\zeta\\
=\int_{U^{\epsilon}\cap\left\{|B(\zeta,z)|>\epsilon\right\}}f(\zeta)\cdot
\det\left[\frac{\bar\zeta}{B(\zeta,z)}\
\bar\partial\left(\frac{\bar\zeta}{B(\zeta,z)}\right)\
\bar\partial\left(\frac{\bar\zeta}{B(\zeta,z)}\right)\right]
\wedge d\zeta\\
+\int_{U^{\epsilon}\cap\left\{|B(\zeta,z)|<\epsilon\right\}}
(f(\zeta)-f(z))\cdot
\det\left[\frac{\bar\zeta}{B(\zeta,z)}\
\bar\partial\left(\frac{\bar\zeta}{B(\zeta,z)}\right)\
\bar\partial\left(\frac{\bar\zeta}{B(\zeta,z)}\right)\right]
\wedge d\zeta\\
+f(z)\int_{U^{\epsilon}\cap\left\{|B(\zeta,z)|<\epsilon\right\}}
\det\left[\frac{\bar\zeta}{B(\zeta,z)}\
\bar\partial\left(\frac{\bar\zeta}{B(\zeta,z)}\right)\
\bar\partial\left(\frac{\bar\zeta}{B(\zeta,z)}\right)\right]
\wedge d\zeta.
\end{multline}
\indent
Then we obtain the following estimate for the first integral in the right-hand side
of \eqref{SumRepresentation}
\begin{multline}\label{AbsoluteEstimateOne}
\left|\int_{U^{\epsilon}\cap\left\{|B(\zeta,z)|>\epsilon\right\}}f(\zeta)\cdot
\det\left[\frac{\bar\zeta}{B(\zeta,z)}\
\bar\partial\left(\frac{\bar\zeta}{B(\zeta,z)}\right)\
\bar\partial\left(\frac{\bar\zeta}{B(\zeta,z)}\right)\right]
\wedge d\zeta\right|\\
\leq C\cdot\int_0^Adt\int_0^{\epsilon}du\int_0^{\epsilon}dv
\int_0^A\frac{rdr}{(\epsilon+t+u^2+v^2+r^2)^3}
\leq C\cdot\int_0^{\epsilon}\frac{\tau d\tau}{\epsilon+\tau^2}\\
\leq C\cdot\int_0^{\epsilon^2}\frac{ds}{\epsilon+s}
\leq C\cdot\epsilon,
\end{multline}
where we denoted $\tau=\sqrt{u^2+v^2}$, $s=\tau^2$.

\indent
For the second integral in the right-hand side of \eqref{SumRepresentation} we have
\begin{multline}\label{AbsoluteEstimateTwo}
\left|\int_{U^{\epsilon}\cap\left\{|B(\zeta,z)|<\epsilon\right\}}(f(\zeta)-f(z))\cdot
\det\left[\frac{\bar\zeta}{B(\zeta,z)}\
\bar\partial\left(\frac{\bar\zeta}{B(\zeta,z)}\right)\
\bar\partial\left(\frac{\bar\zeta}{B(\zeta,z)}\right)\right]
\wedge d\zeta\right|\\
\leq C\cdot\int_0^Adt\int_0^Adu\int_0^Adv
\int_0^{\sqrt{\epsilon}}\frac{r^2dr}{(t+u^2+v^2+r^2)^3}
\leq C\cdot\int_0^{\sqrt{\epsilon}}dr
\leq C\cdot\sqrt{\epsilon}.
\end{multline}

\indent
Using estimates \eqref{AbsoluteEstimateOne}, \eqref{AbsoluteEstimateTwo} and
equality
\begin{equation*}
\lim_{\delta\to 0}\left|\int_{|B(\zeta,z)|<\delta}
\det\left[\frac{\bar\zeta}{B(\zeta,z)}\
\bar\partial\left(\frac{\bar\zeta}{B(\zeta,z)}\right)\
\bar\partial\left(\frac{\bar\zeta}{B(\zeta,z)}\right)\right]
\wedge d\zeta\right|=0
\end{equation*}
we obtain equality \eqref{Zero-U-Equality}.
\end{proof}

\indent
Using now the barrier function \eqref{BoundaryBarrier} we estimate the behavior of the third
integral in the right-hand side of \eqref{KOperator} in the following proposition.

\begin{proposition}\label{BoundaryIntegral} 
Let $f$ be a holomorphic function of homogeneity $\ell<0$ on $U^{\epsilon}$
for some $\epsilon>0$. Then for $z\in V$ the following equality holds
\begin{multline}\label{FirstIntegral}
\lim_{\epsilon\to 0}\int_{\Gamma^{\epsilon}_{12}}
f(\zeta)\cdot\det\left[\frac{Q(\zeta,z)}{P(\zeta)-P(z)}\
\frac{\bar z}{B^*(\zeta,z)}\ \frac{\bar\zeta}{B(\zeta,z)}\right]d\zeta\\
=-\lim_{\epsilon\to 0}\int_{\Gamma^{\epsilon}_{12}}f(\zeta)\cdot
\det\left[\frac{R(z)}{F(z,\zeta)}\ \frac{Q(\zeta,z)}{P(\zeta)-P(z)}\
\frac{{\bar\zeta}}{B(\zeta,z)}\right]d\zeta,
\end{multline}
where $F(z,\zeta)=\sum_{i=0}^2R_i(z)(\zeta_i-z_i)$
is the barrier function from \eqref{BoundaryBarrier}.
\end{proposition}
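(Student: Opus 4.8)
The plan is to compare the two integrands on $\Gamma_{12}^\epsilon$ by a determinantal identity of the same type as \eqref{GeneralDeterminant}, exploiting the fact that on $\Gamma_{12}^\epsilon$ all three barriers $Q(\zeta,z)/(P(\zeta)-P(z))$, $\bar z/B^*(\zeta,z)$, $\bar\zeta/B(\zeta,z)$, and now also $R(z)/F(z,\zeta)$, are Leray sections: each of them, when paired with $\zeta-z$, gives a nonvanishing scalar (namely $P(\zeta)-P(z)$, $B^*(\zeta,z)$, $B(\zeta,z)$, $F(z,\zeta)$ respectively over the set $\zeta-z\neq 0$ away from the common zero locus). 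Concretely, I would first record that on $\Gamma_{12}^\epsilon$ one has $P(\zeta)-P(z)=P(\zeta)\neq 0$ (since $|P(\zeta)|=\epsilon$ and $z\in V$ means $P(z)=0$), so the first barrier is well defined there, and $F(z,\zeta)\neq 0$ for $\zeta$ ranging over $\Gamma_{12}^\epsilon$ as $\epsilon\to 0$ because of condition (i) of \eqref{RConditions} together with transversality in (ii): the line $\{\zeta:F(z,\zeta)=0\}$ meets $V$ in the finite set ${\cal S}(z)$, which for small $\epsilon$ is disjoint from $\Gamma_{12}^\epsilon$ (the latter shrinks to $bV$, and one may assume $z$ and ${\cal S}(z)$ lie in the open part $V$).

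The key algebraic step is the $4\times 4$ vanishing-determinant identity: since the four vectors $\tfrac{R(z)}{F(z,\zeta)}$, $\tfrac{Q(\zeta,z)}{P(\zeta)-P(z)}$, $\tfrac{\bar z}{B^*(\zeta,z)}$, $\tfrac{\bar\zeta}{B(\zeta,z)}$ all satisfy $\langle\,\cdot\,,\zeta-z\rangle=1$ after normalization — equivalently, the three differences of consecutive barriers are annihilated by $\zeta-z$ — the determinant
\[
\det\left[\begin{tabular}{cccc}
$1$&$1$&$1$&$1$\vspace{0.05in}\\
${\dis \frac{R(z)}{F(z,\zeta)} }$&${\dis \frac{Q(\zeta,z)}{P(\zeta)-P(z)} }$&${\dis \frac{\bar z}{B^*(\zeta,z)} }$&${\dis \frac{\bar\zeta}{B(\zeta,z)} }$
\end{tabular}\right]
\]
vanishes identically on $\Gamma_{12}^\epsilon$, where the first row consists of the scalars $1$ placed over each column and the remaining three rows are the $\C^3$-valued entries; this is exactly the mechanism used to pass from $\eta_2$ to $\eta_1$ in the proof of Proposition~\ref{EpsilonFormula}. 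Wedging this identity with $d\zeta$ and expanding along the first row (each minor being a $3\times 3$ determinant of $\C^3$-valued one-forms wedged with $d\zeta$) produces a four-term relation among the products
$\det[\,Q/(P-P)\ \bar z/B^*\ \bar\zeta/B\,]\wedge d\zeta$,
$\det[\,R/F\ \bar z/B^*\ \bar\zeta/B\,]\wedge d\zeta$,
$\det[\,R/F\ Q/(P-P)\ \bar\zeta/B\,]\wedge d\zeta$,
$\det[\,R/F\ Q/(P-P)\ \bar z/B^*\,]\wedge d\zeta$,
with alternating signs. Note that $\Gamma_{12}^\epsilon$ is a real $3$-dimensional cycle, so only the $(3,0)$-parts in $\zeta$ survive after wedging with the $(3,0)$-form $d\zeta$; in particular the last term, involving two barriers $\bar z/B^*$ and $Q/(P-P)$ that are $(0,0)$ in $z$ but need a $\bar\partial_\zeta$ to contribute a $d\bar\zeta$, must be examined — but since $R(z)$ and $z$ are constant in $\zeta$ here, that term has at most $d\bar\zeta_{j}$ from $B^*$ differentiation and one checks it contributes no $(3,0)$-component when wedged with $d\zeta$, hence drops out. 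The surviving relation, integrated over $\Gamma_{12}^\epsilon$ against $f(\zeta)$ and passed to the limit $\epsilon\to 0$, is precisely \eqref{FirstIntegral}, once one argues that the two \emph{new} terms (those with $\bar z/B^*$ in the last slot, i.e. the ones \emph{not} appearing in \eqref{FirstIntegral}) tend to zero.

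The main obstacle is exactly this last point: showing that the extra integrals produced by the expansion vanish in the limit. These are integrals over $\Gamma_{12}^\epsilon$ — a set whose volume is $O(\epsilon)$ — of kernels that are now singular along ${\cal S}(z)\cup\{\zeta=z\}$ through the factor $1/F(z,\zeta)$ and along $\{P(\zeta)=0\}$ through $1/(P(\zeta)-P(z))=1/P(\zeta)$; on $\Gamma_{12}^\epsilon$ the latter equals $1/\epsilon$ in modulus, so one must extract enough decay from $B^*(\zeta,z)$, $B(\zeta,z)$, and $dF$, $d\bar\zeta$ to beat the $1/\epsilon$ blow-up against the $O(\epsilon)$ volume. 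The argument will parallel the estimates in Lemmas~\ref{Zero-on-Gamma2} and \ref{Zero-on-U} and in the forthcoming analysis of the $Q/P$ integrals: introduce local coordinates $(t,u,v,r)$ on $\*S^5(1)$ near $bV$ adapted to $\{|P|=\epsilon\}$ and $\{\varrho=0\}$, so that $|P(\zeta)|\sim\epsilon$, $|B^*|,|B|\gtrsim$ the relevant coordinate combinations, $|F(z,\zeta)|$ is bounded below away from ${\cal S}(z)$, and the surviving one-forms $d\bar\zeta$, $dB^*$ each carry a coordinate differential; a direct estimate of the resulting iterated integral, exactly as in \eqref{AbsoluteEstimateOne}–\eqref{AbsoluteEstimateTwo}, gives a bound $C\cdot\epsilon^{1/2}$ (or better), which suffices. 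Near the point $\zeta=z$ and near ${\cal S}(z)\setminus\{z\}$, which are interior to $V$ and hence stay a fixed distance from $\Gamma_{12}^\epsilon$ for small $\epsilon$, there is no additional singularity to worry about, so the global contribution is controlled by the volume bound alone. Assembling the vanishing of the two extra terms with the identically-vanishing $4\times 4$ determinant yields \eqref{FirstIntegral}.
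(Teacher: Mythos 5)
Your algebraic skeleton is the same as the paper's: the vanishing $4\times4$ determinant built from the four barriers $\frac{Q(\zeta,z)}{P(\zeta)-P(z)}$, $\frac{\bar z}{B^*(\zeta,z)}$, $\frac{\bar\zeta}{B(\zeta,z)}$, $\frac{R(z)}{F(z,\zeta)}$ yields the four-term relation you describe, and the term $\det\left[\frac{\bar z}{B^*}\ \frac{\bar\zeta}{B}\ \frac{R}{F}\right]\wedge d\zeta$ is indeed killed by the bounded-kernel-times-$O(\epsilon)$-volume estimate, exactly as in the paper. The gap is in your treatment of the remaining extra term $\det\left[\frac{Q(\zeta,z)}{P(\zeta)-P(z)}\ \frac{\bar z}{B^*(\zeta,z)}\ \frac{R(z)}{F(z,\zeta)}\right]\wedge d\zeta$. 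Your first argument --- that it ``contributes no $(3,0)$-component when wedged with $d\zeta$'' --- does not apply: all four minors in the expansion are determinants of $\C^3$-valued \emph{functions}, i.e.\ $0$-forms, so every term is a $(3,0)$-form of the shape $(\text{function})\cdot d\zeta_0\wedge d\zeta_1\wedge d\zeta_2$, and none of them vanishes on the $3$-cycle $\Gamma^{\epsilon}_{12}$ for type reasons. Your fallback argument --- a size estimate in adapted coordinates giving $C\cdot\epsilon^{1/2}$ --- also cannot work: on $\Gamma^{\epsilon}_{12}$ one has $|P(\zeta)-P(z)|=|P(\zeta)|=\epsilon$ exactly, while $B^*(\zeta,z)$ and $F(z,\zeta)$ are merely bounded below (for $z$ interior to $V$ the sets $\{z\}$ and ${\cal S}(z)$ stay a fixed distance from $\Gamma^{\epsilon}_{12}$, as you yourself note), so the integrand has modulus comparable to $1/\epsilon$ against a cycle of volume comparable to $\epsilon$; the absolute value of the integral is $O(1)$, not $o(1)$, and no coordinate bookkeeping will improve this --- it is a residue-type integral around the circle $|P|=\epsilon$ and is not small by cancellation-free estimates.

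What actually makes this term disappear in the paper is Lemma~\ref{ZeroAtInfinity}: the kernel $f(\zeta)\det\left[\frac{Q(\zeta,z)}{P(\zeta)}\ \frac{\bar z}{B^*(\zeta,z)}\ \frac{R(z)}{F(z,\zeta)}\right]d\zeta$ is \emph{holomorphic in $\zeta$} (this is the one minor containing neither $\bar\zeta/B$ nor any $\bar\partial$), so one lifts to the cone in $\C^3$, applies Stokes between $\Gamma^{\epsilon}_{12}$ and its dilation $\Gamma^{\epsilon}_{12}(a)$, and uses the decay $|Q/P|,|\bar z/B^*|,|R/F|\leq C/|\zeta|$ together with the homogeneity $f(\lambda\zeta)=\lambda^{\ell}f(\zeta)$, $\ell<0$, to bound the integral over $\Gamma^{\epsilon}_{12}(a)$ by $C\cdot a^{3}/a^{4}\to 0$ as $a\to\infty$; hence the integral is identically zero for each $\epsilon$. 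This is the only place the hypothesis $\ell<0$ enters the proposition, and it is entirely absent from your proposal --- a reliable sign that something essential is missing. To repair the proof you need to supply this holomorphy-plus-negative-homogeneity Stokes argument (or an equivalent residue computation at infinity) for the $\left[\frac{Q}{P}\ \frac{\bar z}{B^*}\ \frac{R}{F}\right]$ term.
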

\begin{proof}
Using equality
\begin{equation*}
\det\left[\begin{tabular}{cccc}
$1$&$1$&$1$&$1$
\vspace{0.05in}\\
${\dis \frac{Q(\zeta,z)}{P(\zeta)-P(z)} }$&${\dis \frac{\bar z}{B^*(\zeta,z)} }$
&${\dis \frac{\bar\zeta}{B(\zeta,z)} }$
&${\dis \frac{R(z)}{F(z,\zeta)} }$
\end{tabular}\right]=0,
\end{equation*}
its corollary
\begin{multline*}
\det\left[\frac{{\bar z}}{B^*(\zeta,z)}\
\frac{\bar\zeta}{B(\zeta,z)}\ \frac{R(z)}{F(z,\zeta)}\right]
-\det\left[\frac{Q(\zeta,z)}{P(\zeta)-P(z)}\ \frac{{\bar\zeta}}{B(\zeta,z)}\
\frac{R(z)}{F(z,\zeta)}\right]\\
+\det\left[\frac{Q(\zeta,z)}{P(\zeta)-P(z)}\ \frac{{\bar z}}{B^*(\zeta,z)}
\frac{R(z)}{F(z,\zeta)}\right]
-\det\left[\frac{Q(\zeta,z)}{P(\zeta)-P(z)}\ \frac{{\bar z}}{B^*(\zeta,z)}\
\frac{{\bar\zeta}}{B(\zeta,z)}\right]=0,
\end{multline*}
and estimate
\begin{equation*}
\left|\int_{\Gamma^{\epsilon}_{12}}f(\zeta)\cdot
\det\left[\frac{{\bar z}}{B^*(\zeta,z)}\ \frac{\bar\zeta}{B(\zeta,z)}\
\frac{R(z)}{F(z,\zeta)}\right]\right|
\leq C\cdot\epsilon \to 0
\end{equation*}
for $z\in V$ as $\epsilon\to 0$, we obtain equality
\begin{multline}\label{StillTwoIntegrals}
\lim_{\epsilon\to 0}\int_{\Gamma^{\epsilon}_{12}}
f(\zeta)\cdot\det\left[\frac{Q(\zeta,z)}{P(\zeta)-P(z)}\
\frac{\bar z}{B^*(\zeta,z)}\ \frac{\bar\zeta}{B(\zeta,z)}\right]d\zeta\\
=-\lim_{\epsilon\to 0}\int_{\Gamma^{\epsilon}_{12}}f(\zeta)\cdot
\det\left[\frac{Q(\zeta,z)}{P(\zeta)-P(z)}\ \frac{{\bar\zeta}}{B(\zeta,z)}\
\frac{R(z)}{F(z,\zeta)}\right]d\zeta\\
+\lim_{\epsilon\to 0}\int_{\Gamma^{\epsilon}_{12}}f(\zeta)\cdot
\det\left[\frac{Q(\zeta,z)}{P(\zeta)-P(z)}\ \frac{{\bar z}}{B^*(\zeta,z)}\
\frac{R(z)}{F(z,\zeta)}\right]d\zeta.
\end{multline}
\indent
To simplify the right-hand side of equality \eqref{StillTwoIntegrals} we need the following lemma.
\begin{lemma}\label{ZeroAtInfinity} 
If $f$ is a holomorphic function of homogeneity $\ell<0$ on $U^{\epsilon}$
for some $\epsilon>0$, then for $z\in V$
\begin{equation}\label{HolomorphicZero}
\int_{\Gamma^{\epsilon}_{12}}f(\zeta)\cdot
\det\left[\frac{Q(\zeta,z)}{P(\zeta)-P(z)}\ \frac{{\bar z}}{B^*(\zeta,z)}\
\frac{R(z)}{F(z,\zeta)}\right]d\zeta=0.
\end{equation}
\end{lemma}
\begin{proof}
To evaluate for $z\in V$ the integral
\begin{multline*}
\int_{\Gamma^{\epsilon}_{12}}f(\zeta)\cdot
\det\left[\frac{Q(\zeta,z)}{P(\zeta)-P(z)}\ \frac{{\bar z}}{B^*(\zeta,z)}\
\frac{R(z)}{F(z,\zeta)}\right]d\zeta\\
=\int_{\Gamma^{\epsilon}_{12}}f(\zeta)\cdot
\det\left[\frac{Q(\zeta,z)}{P(\zeta)}\ \frac{{\bar z}}{B^*(\zeta,z)}\
\frac{R(z)}{F(z,\zeta)}\right]d\zeta
\end{multline*}
we consider the restriction of the function $f$ to $V$, and its lift to
$\sigma^{-1}(V)\subset\sigma^{-1}\left(\C\P^2\right)=\C^3\setminus\{0\}$,
which we also denote by $f$.\\
\indent
We use the following estimates for $|\zeta|\to\infty$
\begin{equation}\label{HomogeneityEstimates}
\left|\frac{\bar z}{B^*(\zeta,z)}\right|,\
\left|\frac{Q(\zeta,z)}{P(\zeta)}\right|,\
\left|\frac{R(z)}{F(z,\zeta)}\right| \leq \frac{C}{|\zeta|},
\end{equation}
and notice that for all values of $a>0$ the sets
$$\Gamma^{\epsilon}_{12}( a)
=\left\{\zeta\in \*S^{5}(a): |P(\zeta)|=\epsilon\cdot a^{\deg P},\
\varrho(\zeta)=0\right\}$$
are real analytic subvarieties of $\*S^{5}(a)$ of real dimension $3$ satisfying
\begin{equation}\label{VolumeEstimate}
c\cdot a^{3}\cdot\text{Volume}\left(\Gamma^{\epsilon}_{12}\right)
<\text{Volume}\left(\Gamma^{\epsilon}_{12}(a)\right)
<C\cdot a^{3}\cdot\text{Volume}\left(\Gamma^{\epsilon}_{12}\right).
\end{equation}
\indent
We use the Stokes' formula for the form
\begin{equation*}
f(\zeta)\cdot\det\left[\frac{\bar z}{B^*(\zeta,z)}\ \frac{Q(\zeta,z)}{P(\zeta)}\
\frac{R(z)}{F(z,\zeta)}\right]d\zeta,
\end{equation*}
which is holomorphic with respect to $\zeta$, on the variety
$$\left\{\zeta\in \C^3:
\left|P(\zeta)\right|=\epsilon\cdot|\zeta|^{\deg P},\
\varrho(\zeta)=0,\ 1<|\zeta|<a\right\}$$
with the boundary
\begin{equation*}
\Gamma^{\epsilon}_{12}\cup \Gamma^{\epsilon}_{12}(a).
\end{equation*}
\indent
Then for $f$ satisfying $f(\lambda\zeta)=\lambda^{\ell}f(\zeta)$ with $\ell<0$
and $z\in V$, using estimates \eqref{HomogeneityEstimates} and
\eqref{VolumeEstimate} we obtain the following estimate
\begin{multline*}
\left|\int_{\Gamma^{\epsilon}_{12}}f(\zeta)\cdot
\det\left[\frac{Q(\zeta,z)}{P(\zeta)}\ \frac{{\bar z}}{B^*(\zeta,z)}\
\frac{R(z)}{F(z,\zeta)}\right]d\zeta\right|\\
=\left|\int_{\Gamma^{\epsilon}_{12}(a)}f(\zeta)\cdot
\det\left[\frac{Q(\zeta,z)}{P(\zeta)}\ \frac{{\bar z}}{B^*(\zeta,z)}\
\frac{R(z)}{F(z,\zeta)}\right]d\zeta\right|
\leq C\cdot\frac{\text{Volume}\left(\Gamma^{\epsilon}_{12}(a)\right)}{a^4}\to 0
\end{multline*}
as $a\to \infty$.
\end{proof}
\indent
Applying Lemma~\ref{ZeroAtInfinity} to the second integral in
the right-hand side of equality \eqref{StillTwoIntegrals}, we obtain equality \eqref{FirstIntegral}.
\end{proof}

\indent
To estimate the second integral of the right-hand side of \eqref{KOperator}
we use the following proposition.
\begin{proposition}\label{Gamma1Integral} 
For a holomorphic function $f$ on $U^{\epsilon}$ and $z\in V$ the following equality holds
\begin{multline}\label{SecondIntegral}
\lim_{\epsilon\to 0}\int_{\Gamma^{\epsilon}_1}
f(\zeta)\cdot\det\left[\frac{Q(\zeta,z)}{P(\zeta)-P(z)}\
\frac{\bar\zeta}{B(\zeta,z)}\ \bar\partial\left(\frac{\bar\zeta}{B(\zeta,z)}\right)\right]
\wedge d\zeta\\
=-\lim_{\epsilon\to 0}\lim_{\delta\to 0}\int_{\Gamma^{\epsilon,\delta}_{12}(z)}
f(\zeta)\cdot\det\left[\frac{R(z)}{F(z,\zeta)}\
\frac{Q(\zeta,z)}{P(\zeta)-P(z)}\ \frac{\bar\zeta}{B(\zeta,z)}\right]d\zeta\\
-\lim_{\epsilon\to 0}\int_{\Gamma^{\epsilon}_{12}}
f(\zeta)\cdot\det\left[\frac{R(z)}{F(z,\zeta)}\
\frac{Q(\zeta,z)}{P(\zeta)-P(z)}\ \frac{\bar\zeta}{B(\zeta,z)}\right]d\zeta
\end{multline}
where
\begin{equation*}
\Gamma^{\epsilon,\delta}_{12}(z)
=\left\{\zeta\in \*S^{5}(1): |P(\zeta)|=\epsilon,\ |F(z,\zeta)|=\delta\right\}.
\end{equation*}
\end{proposition}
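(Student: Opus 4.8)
The plan is to deform the contour of integration over $\Gamma^{\epsilon}_1$ using Stokes' theorem, pushing it toward the locus where the barrier $F(z,\zeta)$ vanishes, and to collect the boundary terms that arise. First I would observe that on $\Gamma^{\epsilon}_1$ we have $|P(\zeta)|=\epsilon$ fixed and $\varrho(\zeta)<0$, and that the integrand in \eqref{SecondIntegral} is a closed form in $\zeta$ on this set wherever its denominators do not vanish: indeed $f$ is holomorphic, $Q/(P(\zeta)-P(z))$ is holomorphic in $\zeta$ on $\{|P(\zeta)|=\epsilon\}$ since $|P(z)|=0<\epsilon$ forces $P(\zeta)\neq P(z)$, and $\bar\partial\left(\frac{\bar\zeta}{B(\zeta,z)}\right)$ is $\bar\partial$-closed; the only singularity of the integrand on $\Gamma^{\epsilon}_1$ is along $\{B(\zeta,z)=0\}$, which is excised in the usual way by a $\delta$-neighborhood.

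The key step is to invoke the Weil--Leray homotopy construction relative to the two sections $\frac{\bar\zeta}{B(\zeta,z)}$ and $\frac{R(z)}{F(z,\zeta)}$. I would form the Leray-type $(1,0)$-in-$z$, parametrized form built from the linear homotopy between these two sections, exactly as in the derivation of \eqref{LFormula} and \eqref{GeneralDeterminant}: using the identity
\begin{equation*}
\det\left[\frac{Q(\zeta,z)}{P(\zeta)-P(z)}\ \frac{\bar\zeta}{B(\zeta,z)}\ \bar\partial\left(\frac{\bar\zeta}{B(\zeta,z)}\right)\right]
=\det\left[\frac{R(z)}{F(z,\zeta)}\ \frac{Q(\zeta,z)}{P(\zeta)-P(z)}\ \bar\partial\left(\frac{\bar\zeta}{B(\zeta,z)}\right)\right]
+d_\zeta(\cdots)
\end{equation*}
modulo an exact term, one rewrites the $\Gamma^{\epsilon}_1$-integral as an integral of an exact form plus the integral of the kernel with $\frac{\bar\zeta}{B(\zeta,z)}$ replaced by $\frac{R(z)}{F(z,\zeta)}$ in one slot. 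Applying Stokes' theorem over $\Gamma^{\epsilon}_1$ — whose boundary as a chain is $-\Gamma^{\epsilon}_{12}$ (where $\varrho=0$) together with the artificial boundary $\{|F(z,\zeta)|=\delta\}$ introduced to avoid the pole of $R(z)/F(z,\zeta)$, i.e. $\Gamma^{\epsilon,\delta}_{12}(z)$ — produces precisely the two boundary integrals on the right-hand side of \eqref{SecondIntegral}, with signs determined by orientation. The term with $\bar\partial\left(\frac{\bar\zeta}{B(\zeta,z)}\right)$ wedged against two copies of the same section drops out by antisymmetry of the determinant, and the $\delta\to 0$ and $\epsilon\to 0$ limits are taken in that order to respect the nesting of the excised neighborhoods.

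The main obstacle will be bookkeeping the orientations and the order of limits carefully: one must check that the artificial boundary $\Gamma^{\epsilon,\delta}_{12}(z)$ does indeed appear with the stated sign, that the contribution of the excised $\delta$-tube around $\{B(\zeta,z)=0\}$ inside $\Gamma^{\epsilon}_1$ vanishes as $\delta\to 0$ (a volume-versus-singularity estimate of the type already used in \eqref{AbsoluteEstimateOne} and \eqref{AbsoluteEstimateTwo}, since the worst singularity $|B|^{-2}$ is integrable against the $2$-real-dimensional slice cut out near $\{B=0\}$ in the $3$-dimensional $\Gamma^{\epsilon}_1$), and that condition (ii) of \eqref{RConditions} guarantees transversality of $\{F(z,\zeta)=0\}$ with $V$ so that $\Gamma^{\epsilon,\delta}_{12}(z)$ is a genuine smooth $3$-chain for small $\delta,\epsilon$. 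Once these orientation and limit-interchange points are settled, \eqref{SecondIntegral} follows directly from Stokes' theorem applied to the homotopy form, with no further computation needed.
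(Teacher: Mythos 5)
Your overall strategy is the same as the paper's: apply the determinant identity \eqref{GeneralDeterminant} with ${\dis \eta_1=\frac{R(z)}{F(z,\zeta)}}$, ${\dis \eta_2=\frac{Q(\zeta,z)}{P(\zeta)-P(z)}}$, identify one of the resulting terms as exact, and run Stokes' theorem on $\left\{|P(\zeta)|=\epsilon,\ |F(z,\zeta)|>\delta\right\}$ so that the boundary pieces $\Gamma^{\epsilon,\delta}_{12}(z)$ and $\Gamma^{\epsilon}_{12}$ produce the right-hand side of \eqref{SecondIntegral}. However, there is a genuine gap in how you dispose of the leftover term. The identity actually reads
\begin{multline*}
\det\left[\frac{Q(\zeta,z)}{P(\zeta)-P(z)}\ \frac{\bar\zeta}{B(\zeta,z)}\ \bar\partial\left(\frac{\bar\zeta}{B(\zeta,z)}\right)\right]
=\det\left[\frac{R(z)}{F(z,\zeta)}\ \frac{\bar\zeta}{B(\zeta,z)}\ \bar\partial\left(\frac{\bar\zeta}{B(\zeta,z)}\right)\right]\\
-\det\left[\frac{R(z)}{F(z,\zeta)}\ \frac{Q(\zeta,z)}{P(\zeta)-P(z)}\ \bar\partial\left(\frac{\bar\zeta}{B(\zeta,z)}\right)\right],
\end{multline*}
and it is the \emph{second} term on the right that is exact (it is $\bar\partial$ of $\det\left[\frac{R(z)}{F(z,\zeta)}\ \frac{Q(\zeta,z)}{P(\zeta)-P(z)}\ \frac{\bar\zeta}{B(\zeta,z)}\right]$, the other two columns being holomorphic in $\zeta$); your displayed identity interchanges the two roles. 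The first term does \emph{not} ``drop out by antisymmetry'': its three columns $\frac{R(z)}{F(z,\zeta)}$, $\frac{\bar\zeta}{B(\zeta,z)}$ and $\bar\partial\left(\frac{\bar\zeta}{B(\zeta,z)}\right)$ are pairwise distinct, so no two columns coincide and the determinant is not identically zero.

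The vanishing of $\lim_{\epsilon\to0}\lim_{\delta\to0}\int_{\Gamma^{\epsilon,\delta}_1(z)}f(\zeta)\cdot\det\left[\frac{R(z)}{F(z,\zeta)}\ \frac{\bar\zeta}{B(\zeta,z)}\ \bar\partial\left(\frac{\bar\zeta}{B(\zeta,z)}\right)\right]\wedge d\zeta$ is precisely the content of Lemma~\ref{ZeroSurface}, and it is not a formal consequence of anything you wrote: it requires a second application of Stokes' theorem on the solid region $U^{\epsilon,\delta}_2(z)=\left\{|F(z,\zeta)|>\delta,\ |P(\zeta)|<\epsilon\right\}$, separate estimates for the boundary pieces $\Gamma^{\epsilon}_2$ and $\Gamma^{\epsilon,\delta}_2(z)$ (of the flavor of \eqref{AbsoluteEstimateOne}--\eqref{AbsoluteEstimateTwo}, as you anticipate, but applied to a different integral), and an appeal to Lemma~\ref{Zero-on-U} for the resulting interior term. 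Without this step your argument only establishes the intermediate identity \eqref{SecondQIntegral}, with that extra integral still present. A minor additional point: the excision of a tube around $\left\{B(\zeta,z)=0\right\}$ that you describe is not needed at this stage, since for $z\in V$ one has $P(z)=0$ while $|P(\zeta)|=\epsilon$ on $\Gamma^{\epsilon}_1$, so $z\notin\Gamma^{\epsilon}_1$ and $B(\zeta,z)$ does not vanish there; the singularity in $B$ only emerges in the limit $\epsilon\to0$, which is exactly why the quantitative estimates are required.
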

\begin{proof} Using equality \eqref{GeneralDeterminant}
with ${\dis \eta_1(\zeta,z)=\frac{R(z)}{F(z,\zeta)} }$,
${\dis \eta_2(\zeta,z)=\frac{Q(\zeta,z)}{P(\zeta)-P(z)} }$,
we obtain equality
\begin{multline*}
\det\left[\frac{Q(\zeta,z)}{P(\zeta)-P(z)}\
\frac{\bar\zeta}{B(\zeta,z)}\ \frac{d\bar\zeta}{B(\zeta,z)}\right]\wedge d\zeta\\
=\det\left[\frac{R(z)}{F(z,\zeta)}\
\frac{\bar\zeta}{B(\zeta,z)}\ \frac{d\bar\zeta}{B(\zeta,z)}\right]\wedge d\zeta
-\det\left[\frac{R(z)}{F(z,\zeta)}\ \frac{Q(\zeta,z)}{P(\zeta)-P(z)}\
\bar\partial\left(\frac{\bar\zeta}{B(\zeta,z)}\right)\right]\wedge d\zeta\\
=\det\left[\frac{R(z)}{F(z,\zeta)}\
\frac{\bar\zeta}{B(\zeta,z)}\ \frac{d\bar\zeta}{B(\zeta,z)}\right]\wedge d\zeta
-\bar\partial\left(\det\left[\frac{R(z)}{F(z,\zeta)}\ \frac{Q(\zeta,z)}{P(\zeta)-P(z)}\
\frac{\bar\zeta}{B(\zeta,z)}\right]\right)\wedge d\zeta.
\end{multline*}
\indent
Then, using the Stokes' formula on
\begin{equation*}
\Gamma^{\epsilon,\delta}_1(z)=\left\{\zeta\in \*S^{5}(1): |P(\zeta)|=\epsilon,\
|F(z,\zeta)|>\delta\right\}
\end{equation*}
for the second term of the right-hand side of equality above,
and holomorphic dependence of $f(\zeta)$ on $\zeta$ we obtain  the following equality
\begin{multline}\label{SecondQIntegral}
\int_{\Gamma^{\epsilon}_1}
f(\zeta)\cdot\det\left[\frac{Q(\zeta,z)}{P(\zeta)-P(z)}\
\frac{\bar\zeta}{B(\zeta,z)}\ \bar\partial\left(\frac{\bar\zeta}{B(\zeta,z)}\right)\right]
\wedge d\zeta\\
=\lim_{\delta\to 0}\int_{\Gamma^{\epsilon,\delta}_1(z)}
f(\zeta)\cdot\det\left[\frac{Q(\zeta,z)}{P(\zeta)-P(z)}\
\frac{\bar\zeta}{B(\zeta,z)}\ \bar\partial\left(\frac{\bar\zeta}{B(\zeta,z)}\right)\right]
\wedge d\zeta\\
=\lim_{\delta\to 0}\int_{\Gamma^{\epsilon,\delta}_1(z)}
f(\zeta)\cdot\det\left[\frac{R(z)}{F(z,\zeta)}\
\frac{\bar\zeta}{B(\zeta,z)}\ \bar\partial\left(\frac{\bar\zeta}{B(\zeta,z)}\right)\right]
\wedge d\zeta\\
-\lim_{\delta\to 0}\int_{\Gamma^{\epsilon,\delta}_{12}(z)}
f(\zeta)\cdot\det\left[\frac{R(z)}{F(z,\zeta)}\
\frac{Q(\zeta,z)}{P(\zeta)-P(z)}\ \frac{\bar\zeta}{B(\zeta,z)}\right]d\zeta\\
-\int_{\Gamma^{\epsilon}_{12}}
f(\zeta)\cdot\det\left[\frac{R(z)}{F(z,\zeta)}\
\frac{Q(\zeta,z)}{P(\zeta)-P(z)}\ \frac{\bar\zeta}{B(\zeta,z)}\right]d\zeta.
\end{multline}

\indent
In the following lemma we estimate the behavior of the first integral in the right-hand
side of \eqref{SecondQIntegral} as $\epsilon\to 0$.
\begin{lemma}\label{ZeroSurface}
Let $f$ be a holomorphic function on $U^{\epsilon}$ for some $\epsilon>0$. Then
for $z\in V$ we have
\begin{equation}\label{ZeroSurfaceEquality}
\lim_{\epsilon\to 0}\lim_{\delta\to 0}\int_{\Gamma^{\epsilon,\delta}_1}
f(\zeta)\cdot\det\left[\frac{R(z)}{F(z,\zeta)}\
\frac{\bar\zeta}{B(\zeta,z)}\
\bar\partial\left(\frac{\bar\zeta}{B(\zeta,z)}\right)\right]
\wedge d\zeta=0.
\end{equation}
\end{lemma}
\begin{proof}
Denoting
\begin{equation*}
\begin{aligned}
&U^{\epsilon,\delta}_2(z)=\left\{\zeta\in \*S^{5}(1): |F(z,\zeta)|>\delta,\
|P(\zeta)|<\epsilon\right\},\vspace{0.1in}\\
&\Gamma^{\epsilon,\delta}_2(z)=\left\{\zeta\in \*S^{5}(1): |F(z,\zeta)|=\delta,\
|P(\zeta)|<\epsilon\right\},
\end{aligned}
\end{equation*}
we apply the Stokes' formula to the form
\begin{equation*}
f(\zeta)\cdot\det\left[\frac{R(z)}{F(z,\zeta)}\
\frac{\bar\zeta}{B(\zeta,z)}\
\frac{d\bar\zeta}{B(\zeta,z)}\right]\wedge d\zeta
\end{equation*}
on the domain $U^{\epsilon,\delta}_2(z)$.
Then we obtain equality
\begin{multline}\label{SurfaceEquality}
\int_{\Gamma_1^{\epsilon,\delta}}f(\zeta)
\cdot\det\left[\frac{R(z)}{F(z,\zeta)}\
\frac{\bar\zeta}{B(\zeta,z)}\
\frac{d\bar\zeta}{B(\zeta,z)}\right]\wedge d\zeta\\
+\int_{\Gamma_2^{\epsilon}}f(\zeta)
\cdot\det\left[\frac{R(z)}{F(z,\zeta)}\
\frac{\bar\zeta}{B(\zeta,z)}\
\frac{d\bar\zeta}{B(\zeta,z)}\right]\wedge d\zeta\\
+\int_{\Gamma_2^{\epsilon,\delta}(z)}f(\zeta)
\cdot\det\left[\frac{R(z)}{F(z,\zeta)}\
\frac{\bar\zeta}{B(\zeta,z)}\
\frac{d\bar\zeta}{B(\zeta,z)}\right]\wedge d\zeta\\
=\int_{U^{\epsilon,\delta}_2(z)}
f(\zeta)\cdot\det\left[\frac{R(z)}{F(z,\zeta)}\
\bar\partial\left(\frac{\bar\zeta}{B(\zeta,z)}\right)\
\bar\partial\left(\frac{\bar\zeta}{B(\zeta,z)}\right)\right]\wedge d\zeta.
\end{multline}

\indent
For the third integral in the left-hand side of \eqref{SurfaceEquality} we have
\begin{multline*}
\left|\int_{\Gamma_2^{\epsilon,\delta}(z)}
f(\zeta)\cdot\det\left[\frac{R(z)}{F(z,\zeta)}\
\frac{\bar\zeta}{B(\zeta,z)}\
\frac{d\bar\zeta}{B(\zeta,z)}\right]\wedge d\zeta\right|
\leq C\cdot\left|\int_0^Adt\int_0^{\epsilon}d\rho
\int_{|w|=\delta}\frac{\rho dw}
{\delta\cdot(\epsilon+t+\rho^2)^2}\right|\\
\leq C\cdot\int_0^{\epsilon}\frac{\rho d\rho}
{(\epsilon+\rho^2)}
\leq C\cdot\int_0^{\epsilon^2}\frac{du}{\epsilon+u}\to 0
\end{multline*}
as $\epsilon\to 0$.

\indent
For the second integral in the left-hand side of \eqref{SurfaceEquality}
using uniform boundedness of the kernel on $\Gamma^{\epsilon}_2$
for $z\in V$ we obtain the following estimate
\begin{equation*}
\left|\int_{\Gamma^{\epsilon}_2}
f(\zeta)\cdot\det\left[\frac{R(z)}{F(z,\zeta)}\
\frac{\bar\zeta}{B(\zeta,z)}\
\frac{d\bar\zeta}{B(\zeta,z)}\right]\wedge d\zeta\right|
\leq C\cdot \text{Volume}\left\{\Gamma^{\epsilon}_2\right\}
\leq C\cdot\epsilon\to 0
\end{equation*}
as $\epsilon\to 0$.\\
\indent
For the integral in the right-hand side of equality \eqref{SurfaceEquality}
we use the equality
\begin{multline*}
\lim_{\epsilon\to 0}\lim_{\delta\to 0}\left|\int_{U^{\epsilon,\delta}_2(z)}
f(\zeta)\cdot\det\left[\frac{R(z)}{F(z,\zeta)}\
\bar\partial\left(\frac{\bar\zeta}{B(\zeta,z)}\right)\
\bar\partial\left(\frac{\bar\zeta}{B(\zeta,z)}\right)\right]
\wedge d\zeta\right|\\
=\lim_{\epsilon\to 0}\lim_{\delta\to 0}\left|\int_{U^{\epsilon,\delta}_2(z)}
f(\zeta)\cdot\det\left[\frac{\bar\zeta}{B(\zeta,z)}\
\bar\partial\left(\frac{\bar\zeta}{B(\zeta,z)}\right)\
\bar\partial\left(\frac{\bar\zeta}{B(\zeta,z)}\right)\right]
\wedge d\zeta\right|=0,
\end{multline*}
where in the second equality we used Lemma~\ref{Zero-on-U}.\\
\indent
From the above estimates follows equality \eqref{ZeroSurfaceEquality}.
\end{proof}
Using equality \eqref{ZeroSurfaceEquality} in \eqref{SecondQIntegral} we obtain equality
\eqref{SecondIntegral} from Proposition~\ref{Gamma1Integral}.
\end{proof}

\section{Proof of Theorem~\ref{Main}.}

\indent
Combining Propositions~\ref{BoundaryIntegral} and \ref{Gamma1Integral} we obtain from
formula \eqref{KOperator} the following formula for $z\in V$:
\begin{multline}\label{SecondKOperator}
\lim_{\epsilon\to 0}K^{\epsilon}\left[f\right](z)
=\frac{1}{(2\pi i)^{3}}\lim_{\epsilon\to 0}\Bigg[\int_{\Gamma^{\epsilon}_1}
f(\zeta)\cdot\det\left[\frac{Q(\zeta,z)}{P(\zeta)}\
\frac{\bar\zeta}{B(\zeta,z)}\ \bar\partial\left(\frac{\bar\zeta}{B(\zeta,z)}\right)\right]
\wedge d\zeta\\
-\int_{\Gamma^{\epsilon}_{12}}
f(\zeta)\cdot\det\left[\frac{Q(\zeta,z)}{P(\zeta)}\
\frac{\bar z}{B^*(\zeta,z)}\ \frac{\bar\zeta}{B(\zeta,z)}\right]d\zeta\Bigg]\\
=\frac{1}{(2\pi i)^{3}}\lim_{\epsilon\to 0}\Bigg[
-\lim_{\delta\to 0}\int_{\Gamma^{\epsilon,\delta}_{12}(z)}
f(\zeta)\cdot\det\left[\frac{R(z)}{F(z,\zeta)}\
\frac{Q(\zeta,z)}{P(\zeta)}\ \frac{\bar\zeta}{B(\zeta,z)}\right]d\zeta\\
+2\int_{\Gamma^{\epsilon}_{12}}
f(\zeta)\cdot\det\left[\frac{Q(\zeta,z)}{P(\zeta)}\
\frac{R(z)}{F(z,\zeta)}\ \frac{\bar\zeta}{B(\zeta,z)}\right]d\zeta\Bigg].
\end{multline}

\indent
We further transform formula \eqref{SecondKOperator} into an integral formula with integral
taken over the boundary and the kernel holomorphically depending on $z$ by explicitly
computing limits in the first integral of its right-hand side in the following lemma.
\begin{lemma}\label{PointResidues}
Let $z\in V$ be fixed, let $w\in {\cal V}_z$, and let $f$ be a holomorphic function
on $U^{\epsilon}$. Then the following equality holds
\begin{equation}\label{ResidueValue}
\frac{1}{(2\pi i)^{3}}\lim_{\epsilon\to 0}
\lim_{\delta\to 0}\int_{\Gamma^{\epsilon,\delta}_{12}(w)}
f(\zeta)\cdot\det\left[\frac{R(w)}{F(w,\zeta)}\
\frac{Q(\zeta,w)}{P(\zeta)}\ \frac{\bar\zeta}{B(\zeta,w)}\right]d\zeta
=\sum_{j=0}^pf(w^{(j)}),
\end{equation}
where $w_0=w,w_1,\dots,w_p$ are the points from \eqref{RConditions} (ii) satisfying equality $F(w,w^{(j)})=0$.
\end{lemma}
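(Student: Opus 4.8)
\textbf{Proof proposal for Lemma~\ref{PointResidues}.}

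The plan is to localize the iterated limit $\lim_{\epsilon\to 0}\lim_{\delta\to 0}$ to small neighborhoods of the finitely many points of ${\cal S}(w)=\{w^{(0)},\dots,w^{(p)}\}$ and then evaluate each local contribution as a multiple residue. First I would observe that the integrand
$$f(\zeta)\cdot\det\left[\frac{R(w)}{F(w,\zeta)}\ \frac{Q(\zeta,w)}{P(\zeta)}\ \frac{\bar\zeta}{B(\zeta,w)}\right]d\zeta$$
has singularities only where $F(w,\zeta)=0$, where $P(\zeta)=0$, or where $B(\zeta,w)=0$; on the integration cycle $\Gamma^{\epsilon,\delta}_{12}(w)=\{|P(\zeta)|=\epsilon,\ |F(w,\zeta)|=\delta\}$ we are bounded away from the first two, so the only relevant singularity on the cycle is $B(\zeta,w)=0$, i.e. $\zeta=w$ (together with points where $\bar\zeta$ lies on the line through $w$, but by homogeneity the true obstruction is $\zeta=w=w^{(0)}$). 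The set $\{P(\zeta)=0\}\cap\{F(w,\zeta)=0\}$ is exactly ${\cal S}(w)$ by condition \eqref{RConditions}(ii), and the intersection is transversal there. Using a partition of unity subordinate to small balls $B(w^{(j)},r)$, $j=0,\dots,p$, together with the estimates of Lemmas~\ref{Zero-on-Gamma2}, \ref{Zero-on-U}, \ref{ZeroSurface}, I would show that the part of the integral supported away from $\bigcup_j B(w^{(j)},r)$ tends to $0$ as $\epsilon,\delta\to 0$, so that only the local pieces near the $w^{(j)}$ survive.

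Next, near each point $w^{(j)}$ with $j\geq 1$ (so $w^{(j)}\neq w$, hence $B(\zeta,w)\neq 0$ nearby), I would pass to local holomorphic coordinates on the curve and write the local contribution as an iterated Cauchy-type integral. Here $P(\zeta)$ and $F(w,\zeta)$ play the role of two local holomorphic functions cutting out $w^{(j)}$ transversally in $\C\P^2$; the form
$$\det\left[\frac{R(w)}{F(w,\zeta)}\ \frac{Q(\zeta,w)}{P(\zeta)}\ \frac{\bar\zeta}{B(\zeta,w)}\right]d\zeta$$
is, up to the explicit holomorphic factor $f(\zeta)$ and a nowhere-vanishing holomorphic factor, a Leray--Koppelman kernel whose residue on the cycle $\{|P|=\epsilon\}\cap\{|F|=\delta\}$ picks out the value at $w^{(j)}$. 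I would exploit the identity \eqref{HomogeneityConditions}, $P(\zeta)-P(w^{(j)})=\sum_i Q^i(\zeta,w^{(j)})(\zeta_i-w^{(j)}_i)$ with $P(w^{(j)})=0$, and the analogous representation $F(w,\zeta)=\sum_i R_i(w)(\zeta_i-w^{(j)}_i)+F(w,w^{(j)})$ with $F(w,w^{(j)})=0$, to recognize the $2\times 2$ sub-structure of the determinant as the Jacobian of $(\zeta\mapsto(P(\zeta),F(w,\zeta)))$ at $w^{(j)}$; the remaining column $\bar\zeta/B(\zeta,w)$ together with $d\zeta$ supplies exactly the third dimension needed for the $3$-form. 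Evaluating the double Cauchy integral then yields the value $f(w^{(j)})$ (the constant works out to $1$ after the normalization already built into the $1/(2\pi i)^3$ prefactor and the factor of $2$ in the kernel; matching this constant carefully is part of the computation). For $j=0$, i.e. near $w^{(0)}=w$, the extra pole $B(\zeta,w)=0$ coincides with the evaluation point, and I would handle this as in the classical Bochner--Martinelli/Cauchy--Fantappi\`e situation: the $\bar\zeta/B(\zeta,w)$ factor provides precisely the Cauchy kernel in the transverse direction, the $\delta\to 0$ limit isolates $\{F(w,\zeta)=0\}$ and the $\epsilon\to 0$ limit isolates $\{P(\zeta)=0\}$, and the triple iterated integral again evaluates to $f(w)=f(w^{(0)})$. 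Summing over $j$ gives \eqref{ResidueValue}.

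I expect the main obstacle to be the $j=0$ term, where the three singular factors $F(w,\cdot)$, $P$, and $B(\cdot,w)$ all meet at the single point $\zeta=w$ and the iterated limits in $\delta$ then $\epsilon$ must be taken in the correct order on a cycle that is itself degenerating; one must be careful that the regularization is compatible with the $z^\delta=z-\delta\nu(z)$ convention from the Remark after Proposition~\ref{Bochner}, so that the boundary-value of the Bochner kernel is taken from the correct side. A secondary, more bookkeeping-type difficulty is pinning down the multiplicative constant: one must check that the three determinant columns, the orientation of $\Gamma^{\epsilon,\delta}_{12}(w)$ inherited from the successive Stokes' applications in Propositions~\ref{BoundaryIntegral} and \ref{Gamma1Integral}, and the factor $2/(2\pi i)^3$ conspire to give coefficient exactly $1$ in front of each $f(w^{(j)})$ rather than some power of $2$ or $2\pi i$. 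Once the localization is justified, the per-point computation is a standard residue calculation using the transversality in \eqref{RConditions}(ii) and the homogeneity relation \eqref{HomogeneityConditions}.
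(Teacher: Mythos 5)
Your overall strategy coincides with the paper's: localize the integral over $\Gamma^{\epsilon,\delta}_{12}(w)$ to small neighborhoods of the finitely many points of ${\cal S}(w)$ (the paper does this in \eqref{SumResidues}, writing $\Gamma^{\epsilon,\delta}_{12}(w^{(j)})=\Gamma^{\epsilon,\delta}_{12}(w)\cap U_j$), and then evaluate each local piece as an iterated residue using the transversality in \eqref{RConditions}(ii). The paper implements the second step by rewriting the determinant kernel as the product of logarithmic derivatives ${\dis \frac{d_\zeta F}{F}\wedge\frac{d_\zeta P}{P}\wedge\frac{d_\zeta B}{B} }$; the residues in $F$ and $P$ collapse the $3$-cycle onto the Hopf circle $\C^1(w^{(j)})\cap\{|\zeta|=1\}=\{\mu w^{(j)}:|\mu|=1\}$ in $\*S^{5}(1)$, and the remaining factor is a one-variable Cauchy kernel on that circle with pole at $\zeta=w^{(j)}$, yielding $f(w^{(j)})$.

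The genuine gap in your plan is in the terms $j\geq 1$, which you classify as the easy ones. For $\zeta,w\in\*S^{5}(1)$ one has $B(\zeta,w)=1-\sum_i\bar\zeta_iw_i$, which vanishes only at $\zeta=w$; hence near $w^{(j)}$ with $j\geq1$ the column $\bar\zeta/B(\zeta,w)$ is a bounded factor with no pole, and a ``double Cauchy integral'' in $P$ and $F$ alone cannot produce a point evaluation: after those two residues you are left with an integral of a nonsingular density over the \emph{entire} residual circle $\{\mu w^{(j)}:|\mu|=1\}$ (the Hopf fiber over $w^{(j)}$, not a point), and your proposal says nothing about why that integral equals $f(w^{(j)})$. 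This is exactly the step where the paper's equality \eqref{OneDIntegral} does the real work: there the third column is traded for $d_\zeta B(\zeta,w^{(j)})/B(\zeta,w^{(j)})$, which \emph{does} have a pole at $\zeta=w^{(j)}$ on the residual circle, so that every $j$ --- not only $j=0$ --- is evaluated by the same one-variable Cauchy formula. Conversely, the on-the-contour pole and the $z^{\delta}$-regularization issue that you reserve for $j=0$ in fact arises for every $j$ in the paper's uniform treatment. So your localization and your bookkeeping of constants and orientations are fine, but the argument as written stalls precisely at the $j\geq1$ contributions; you would need either to justify the replacement of $B(\zeta,w)$ by $B(\zeta,w^{(j)})$ (e.g.\ by showing the difference of the two kernels integrates to zero over the residual circle, using the homogeneity of $f$) or to compute the circle integral with the original kernel directly.
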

\begin{proof}
We represent the integral in the left-hand side of equality \eqref{ResidueValue} as
\begin{multline}\label{SumResidues}
\frac{1}{(2\pi i)^{3}}\lim_{\epsilon\to 0}
\lim_{\delta\to 0}\int_{\Gamma^{\epsilon,\delta}_{12}(w)}
f(\zeta)\cdot\det\left[\frac{R(w)}{F(w,\zeta)}\
\frac{Q(\zeta,w)}{P(\zeta)}\ \frac{\bar\zeta}{B(\zeta,w)}\right]d\zeta\\
=\sum_{j=0}^p\frac{1}{(2\pi i)^{3}}\lim_{\epsilon\to 0}
\lim_{\delta\to 0}\int_{\Gamma^{\epsilon,\delta}_{12}(w^{(j)})}
f(\zeta)\cdot\det\left[\frac{R(w)}{F(w,\zeta)}\
\frac{Q(\zeta,w)}{P(\zeta)}\ \frac{\bar\zeta}{B(\zeta,w)}\right]d\zeta,
\end{multline}
where
$\Gamma^{\epsilon,\delta}_{12}(w^{(j)})=\Gamma^{\epsilon,\delta}_{12}(w)\cap U_j$,
and $U_j$ is a small enough neighborhood of the point $w^{(j)}$.\\
\indent
For each $w^{(j)}\ (j=0,\dots,p)$ we have
\begin{multline}\label{OneDIntegral}
\frac{1}{(2\pi i)^{3}}\lim_{\epsilon\to 0}
\lim_{\delta\to 0}\int_{\Gamma^{\epsilon,\delta}_{12}(w^{(j)})}
f(\zeta)\cdot\det\left[\frac{R(w)}{F(w,\zeta)}\
\frac{Q(\zeta,w)}{P(\zeta)}\ \frac{\bar\zeta}{B(\zeta,w)}\right]d\zeta\\
=\frac{1}{(2\pi i)^{3}}\lim_{\epsilon\to 0}
\lim_{\delta\to 0}\int_{\Gamma^{\epsilon,\delta}_{12}(w^{(j)})}
f(\zeta)\frac{d_{\zeta}F(w^{(j)},\zeta)}{F(w^{(j)},\zeta)}
\wedge\frac{d_{\zeta}P(\zeta)}{P(\zeta)}
\wedge\frac{d_{\zeta}B(\zeta,w^{(j)})}{B(\zeta,w^{(j)})}\\
=\frac{1}{(2\pi i)}\int_{\C^1(w^{(j)})\cap\left\{|\zeta|=1\right\}}
f(\zeta)\cdot\frac{d_{\zeta}B(\zeta,w^{(j)})}{B(\zeta,w^{(j)})},
\end{multline}
where in the last integral we integrate over the unit circle in
$$\C^1(w^{j)})=\left\{\zeta=\mu\cdot w^{(j)}\right\}.$$
\indent
To evaluate the last integral we substitute variables
$\zeta_1=\lambda \zeta_0, \zeta_2=\lambda \zeta_0$ and obtain equality
\begin{equation*}
B(\zeta,z)={\bar\zeta}_0(\zeta_0-w_0)
+{\bar\lambda}_1\lambda_1{\bar\zeta}_0(\zeta_0-w_0)
+{\bar\lambda}_2\lambda_2{\bar\zeta}_0(\zeta_0-w_0)
={\bar\zeta_0}(\zeta_0-w_0)(1+{\bar\lambda}_1\lambda_1+{\bar\lambda}_2\lambda_2).
\end{equation*}
Using this equality in equality \eqref{OneDIntegral} we obtain
\begin{equation*}
\frac{1}{(2\pi i)}\int_{\C^1(w^{(j)})\cap\left\{|\zeta|=1\right\}}
f(\zeta)\cdot\frac{d_{\zeta}B(\zeta,w^{(j)})}{B(\zeta,w^{(j)})}
=\frac{1}{(2\pi i)}\int_{\C^1(w^{(j)})\cap\left\{|\zeta|=1\right\}}
f(\zeta)\cdot\frac{d\zeta_0}{\zeta_0-w^{(j)}_0}=f(w^{(j)}).
\end{equation*}
\end{proof}

\indent
Using equality \eqref{ResidueValue} in formula \eqref{SecondKOperator} we obtain
for $w\in {\cal V}_z$ equality
\begin{equation*}
\lim_{\epsilon\to 0}K^{\epsilon}\left[f\right](w)
=-\sum_{j=0}^pf(w^{(j)})
+\frac{2}{(2\pi i)^{3}}\lim_{\epsilon\to 0}\int_{\Gamma^{\epsilon}_{12}}
f(\zeta)\cdot\det\left[\frac{Q(\zeta,w)}{P(\zeta)}\
\frac{R(w)}{F(w,\zeta)}\ \frac{\bar\zeta}{B(\zeta,w)}\right]d\zeta,
\end{equation*}
and combining it with \eqref{KEpsilonFormula} obtain equality
\begin{equation}\label{MultiFormula}
2f(w)+\sum_{j=1}^pf(w^{(j)})
=\frac{2}{(2\pi i)^{3}}\lim_{\epsilon\to 0}\int_{\Gamma^{\epsilon}_{12}}
f(\zeta)\cdot\det\left[\frac{Q(\zeta,w)}{P(\zeta)}\
\frac{R(w)}{F(w,\zeta)}\ \frac{\bar\zeta}{B(\zeta,w)}\right]d\zeta.
\end{equation}

\indent
Using equality \eqref{MultiFormula}
we construct the following system of linear equations with Vandermonde matrix
for the values of the function $f$ at the points
$\left\{w,w^{(1)},\dots,w^{(p)}\right\}$:
\begin{equation}\label{MultiSystem}
\left[\begin{tabular}{cccc}
1&1&$\cdots$&1\vspace{0.05in}\\
${\dis \frac{w_1}{w_0} }$&${\dis \frac{w^{(1)}_1}{w_0} }$&$\cdots$&
${\dis \frac{w^{(p)}_1}{w^{(p)}_0} }$\vspace{0.05in}\\
$\vdots$&$\vdots$&&$\vdots$\vspace{0.05in}\\
${\dis \left(\frac{w_1}{w_0}\right)^p }$&
${\dis \left(\frac{w^{(1)}_1}{w_0}\right)^p }$&$\cdots$&
${\dis \left(\frac{w^{(p)}_1}{w^{(p)}_0}\right)^p }$
\end{tabular}\right]
\left[\begin{tabular}{c}
$2f(w)$\vspace{0.22in}\\
$f(w^{(1)})$\vspace{0.22in}\\
$\vdots$\vspace{0.22in}\\
$f(w^{(p)})$
\end{tabular}\right]
=\left[\begin{tabular}{c}
$G_0(w)$\vspace{0.22in}\\
$G_1(w)$\vspace{0.22in}\\
$\vdots$\vspace{0.22in}\\
$G_p(w)$
\end{tabular}\right],
\end{equation}
where
\begin{equation*}
G_k(w)=\frac{2}{(2\pi i)^{3}}\lim_{\epsilon\to 0}\int_{\Gamma^{\epsilon}_{12}}
f(\zeta)\cdot\left(\frac{\zeta_1}{\zeta_0}\right)^k\cdot\det\left[\frac{Q(\zeta,w)}{P(\zeta)}\
\frac{R(w)}{F(w,\zeta)}\ \frac{\bar\zeta}{B(\zeta,w)}\right]d\zeta.
\end{equation*}
\indent
By summing individual systems of the form \eqref{MultiSystem}
\begin{equation}\label{JMultiSystem}
\left[\begin{tabular}{cccc}
1&1&$\cdots$&1\vspace{0.05in}\\
${\dis \frac{w_1}{w_0} }$&${\dis \frac{w^{(1)}_1}{w_0} }$&$\cdots$&
${\dis \frac{w^{(p)}_1}{w^{(p)}_0} }$\vspace{0.05in}\\
$\vdots$&$\vdots$&&$\vdots$\vspace{0.05in}\\
${\dis \left(\frac{w_1}{w_0}\right)^p }$&
${\dis \left(\frac{w^{(1)}_1}{w_0}\right)^p }$&$\cdots$&
${\dis \left(\frac{w^{(p)}_1}{w^{(p)}_0}\right)^p }$
\end{tabular}\right]
\left[\begin{tabular}{c}
$f(w)$\vspace{0.1in}\\
$\vdots$\vspace{0.1in}\\
$2f(w^{(j)})$\vspace{0.1in}\\
$\vdots$\vspace{0.1in}\\
$f(w^{(p)})$
\end{tabular}\right]
=\left[\begin{tabular}{c}
$G_0(w^{(j)})$\vspace{0.1in}\\
$\vdots$\vspace{0.1in}\\
$G_j(w^{(j)})$\vspace{0.1in}\\
$\vdots$\vspace{0.1in}\\
$G_p(w^{(j)})$
\end{tabular}\right],
\end{equation}
we construct a \lq\lq symmetrized\rq\rq\ version of \eqref{MultiSystem}
\begin{equation}\label{SymmetricSystem}
A(w)\cdot f(w)= \frac{p+1}{p+2}\cdot G(w,w^{(1)},\dots,w^{(p)}),
\end{equation}
where $A$ is the Vandermonde matrix of system \eqref{MultiSystem}, and
\begin{equation}
G_k(w,w^{(1)},\dots,w^{(p)})=\sum_{j=0}^pG_k(w^{(j)}).
\end{equation}
Then using the Cramer's rule for system \eqref{SymmetricSystem}
we obtain equality \eqref{fValues}.

\end{document}